\documentclass[reqno, english]{amsart}
\usepackage{amsfonts, amsmath, latexsym, verbatim, amscd, mathrsfs, color, array, titletoc}
\usepackage[colorlinks,
linkcolor=red,
anchorcolor=blue,
citecolor=blue]{hyperref}

\allowdisplaybreaks

\usepackage{bbm}
\usepackage{float}
\usepackage{amsmath, amssymb, amsthm, amsfonts, graphicx, color, mathtools}
\usepackage[hmargin=2.5cm, vmargin=2.5cm]{geometry}
\usepackage{bbm}
\usepackage{pdfsync}
\usepackage{epstopdf}
\usepackage{cite}
\usepackage{multirow}
\usepackage[font=bf,aboveskip=15pt]{caption}
\usepackage[toc,page]{appendix}

\usepackage{enumerate}
\usepackage{bookmark}
\DeclareFontShape{OT1}{cmr}{bx}{sc}{<-> cmbcsc10}{}
\setcounter{tocdepth}{4}
\setcounter{secnumdepth}{4}
\usepackage{ulem}
\usepackage{syntonly}
\usepackage{upgreek}
\usepackage{bm}
\usepackage{nicematrix}
\usepackage{lipsum}

\allowdisplaybreaks[4]






\newcommand{\h}{\hat  }

\newcommand{\R} {\mathbb R}

\newcommand{\cuad}{{\sqcap\kern-.68em\sqcup}}

\newcommand{\be}{\begin{equation}}
	\newcommand{\ee}{\end{equation}}

\newtheorem{lemma}{Lemma}[section]
\newtheorem{prop}{Proposition}[section]
\newtheorem{theorem}{Theorem}[section]
\newtheorem{corollary}{Corollary}[section]
\newtheorem{remark}{Remark}[section]
\newcommand{\bremark}{\begin{remark} \em}
	\newcommand{\eremark}{\end{remark} }

\numberwithin{equation}{section}
\linespread{1.2}

\begin{document}
	\title[Rotating spirals for three-component competition systems]{Rotating spirals for three-component competition systems}
	
	\author[Z. Li]{Zaizheng Li}
	\address{\noindent
		School of Mathematical Sciences, Hebei Normal University, Shijiazhuang 050024, Hebei, P.R. China}
	\email{zaizhengli@hebtu.edu.cn}

	\author[S. Terracini]{Susanna Terracini}
	\address{\noindent
	Dipartimento di Matematica “Giuseppe Peano”, Universit\`a di Torino, Via Carlo Alberto, 10, 10123 Torino, Italy}
	\email{susanna.terracini@unito.it}

	\keywords{Competition-diffusion systems, Rotating spirals, Multi-parameter bifurcation, Fixed point theorem}

	\begin{abstract}
		We investigate the existence of rotating spirals for three-component competition-diffusion systems  in $B_1\subset \R^2$:
		\begin{equation*}
		\begin{cases}
			\partial_tu_1-\Delta u_1=f(u_1)-\beta  \alpha u_1u_2-\beta \gamma u_1 u_3,& \text{in}\ B_1\times \mathbb{R}^+,\\
			\partial_tu_2-\Delta u_2=f(u_2)-\beta \gamma u_1u_2-\beta \alpha u_2 u_3,& \text{in}\ B_1\times \mathbb{R}^+,\\
			\partial_tu_3-\Delta u_3=f(u_3)-\beta \alpha u_1u_3-\beta \gamma u_2 u_3,& \text{in}\ B_1\times \mathbb{R}^+,\\
			u_i(\textbf{x},0)=u_{i,0}(\textbf{x}), i=1,2,3, &\text{in} \ B_1,
		\end{cases}
	\end{equation*}
	with Neumann or Dirichlet boundary conditions,
	where $f(s)=\mu s(1-s)$, $\mu, \beta>0$, $\alpha>\gamma>0$.
		For the Neumann problem, we establish the existence of rotating spirals by applying the multi-parameter bifurcation theorem. As a byproduct, the instability of the constant positive solution is proved.
		In addition, for the non-homogeneous Dirichlet problem, the Rothe fixed point theorem is employed to prove the existence of rotating spirals. 
		\\
		\medskip
		
			\noindent\footnotesize	{ \textbf{AMS Subject Classification (2020):} 35K51, 35B32, 92D25.}
	\end{abstract}

	\maketitle
	


	\section{Introduction and main results}

This paper aims to contribute to the understanding of spatial patterning for diffusive Gause-Lotka-Volterra systems in the form
		\begin{equation}\label{systemLK}
		\partial_tu_i- d_i\Delta u_i=f_i(u_i)-\sum_{j\neq i}a_{ij} u_iu_j\;,
		\end{equation}				
in some domain $\Omega$, where $u_i(\textbf{x},t)$ represents the (non negative)  population density of the $i$-th species at time $t$ ($1\le i\le J$), 
$d_i\nabla u_i$ is the flow due to dispersal,  $f_i(u_i)$ accounts to the internal growth, and $A=(a_{ij})_{ij}$ is the interspecific interaction rates matrix. When the off-diagonal coefficients are all negative, the interaction is purely competitive, and two cases are distinguished according to whether the matrix $A$ is symmetric or not.  An asymmetric matrix accounts for a gap in the mutual aggressiveness between populations.  In ecology, these models help explain the spatial distribution of species and the formation of ecological patterns in diverse ecosystems (see, e.g., \cite{CanCosBook}).

We deal with the case of competing species in a homogeneous environment,  which corresponds to constant coefficients $d_i, a_{ij}$, space-independent reactions $f_i$, and no flux at the boundary (Neumann boundary conditions).  A natural question is whether such competing species could coexist in spatially homogeneous environments by forming non-constant distributions, or one or more of them is doomed to extinction as $t\to +\infty$.  More precisely, since the system admits constant solutions in space and time, the question is whether these are stable or not and whether there are others that do not lead to the asymptotic extinction of some of the populations. This is connected with the so-called Gause's law and Grinnell's competitive exclusion principle in ecology: \textit{Two species of approximately the same food habits are not likely to remain long evenly balanced in numbers in the same region. One will crowd out the other}. The perimeter within which this statement is true within systems of type \eqref{systemLK} has been the subject of intense study in the 1980s, starting from seminal papers  \cite{Kis81, Kis82, KisMimYos83, KisWei1985, Mim79, Mim1981}. Research in the field developed over the following decades to the recently proposed new predator-prey models with many species \cite{BerZil2018, BerZil2019}.

In this paper, we deal with a purely competitive three-population case,  when the three populations are interchangeable in terms of their diffusion speed and internal dynamics and are cyclically interchangeable in terms of intra-specific competition coefficients. Thus we are concerned with the following  reaction-diffusion system in the unit ball $B_1\subset \mathbb{R}^2$:
		\begin{equation}\label{system-t}
		\begin{cases}
			\partial_tu_1-\Delta u_1=f(u_1)-\beta  \alpha u_1u_2-\beta \gamma u_1 u_3,& \text{in}\ B_1\times \mathbb{R}^+,\\
			\partial_tu_2-\Delta u_2=f(u_2)-\beta \gamma u_1u_2-\beta \alpha u_2 u_3,& \text{in}\ B_1\times \mathbb{R}^+,\\
			\partial_tu_3-\Delta u_3=f(u_3)-\beta \alpha u_1u_3-\beta \gamma u_2 u_3,& \text{in}\ B_1\times \mathbb{R}^+\\
			u_i(\textbf{x},0)=u_{i,0}(\textbf{x}), i=1,2,3, &\text{in} \ B_1,
			\end{cases}
	\end{equation}
	with Neumann or Dirichlet boundary conditions,	  logistic reaction terms $f(s)=\mu s(1-s)$, $\mu>0$, $\beta>0$, $\alpha>\gamma>0$.
Note that the asymmetry affects cyclically the system, and the gap in the mutual aggressiveness between populations is quantified in the ratio $\alpha/\gamma$.   We are interested in coexistence solutions for possibly high values of the competition parameter $\beta$.

	The high competition regime for Lotka-Volterra multispecies systems (i.e., \eqref{systemLK} with $a_{ij}=\beta \bar a_{ij}$ and $\beta\to\infty)$ has been the focus of research over the last two decades, both in the stationary and evolutionary case. In seminal papers, for the stationary solutions of a class of systems including \eqref{system-t}, Dancer-Du \cite{Dancer-Du-1995, Dancer-Du-1995-2} obtained existence results for positive solutions to the homogeneous Dirichlet problems, while Conti-Terracini-Verzini \cite{T-2005} provided existence for the non-homogeneous boundary conditions. In addition, \cite{T-2005} fully describes the asymptotic behaviour of positive solutions as the competition rates go to infinity for the two-species case and prove some regularity properties for the multi-species case. Moreover,  the free boundary structure of the limit system was also established in \cite{Caffarelli, T-2006, T-2009, Spiraling-2019}. Bartsch-Tian-Wang \cite{Bartsch-2013, Bartsch-2015} obtained some bifurcation results from the synchronized solution branch for a multi-component variational system. Going back to the time-dependent system, Dancer-Wang-Zhang \cite{wang-zhang-2010, Dancer-2011, Dancer-2012}  studied the existence, uniqueness, uniform regularity, and free boundary for multi-species system. 
	
	When $\beta\to\infty$, stationary solutions admit segregated limiting profiles, giving rise to interesting spatial patterns, whose nodal set geometry was described in \cite{Caffarelli, Spiraling-2019}. More precisely, \cite{Spiraling-2019} linked the asymmetry of the interspecific competition rates $a_{ij}$ to the presence of nodal lines spiralling towards their junctions. This phenomenon raises the spontaneous question of the existence of rotating spirals, i.e., rigidly rotating movements for competitive systems in the asymmetrical case.
Indeed, Murakawa-Ninomiya \cite{MN-2011} firstly detected the rotating spirals when conducting numerical simulations on system \eqref{system-t}. Subsequently, Salort-Terracini-Verzini-Zilio \cite{susanna-2022} rigorously proved the existence of rotating spiralling solutions in the segregated limiting profiles and described the free boundary structure by a constructive method, but only in the case of linear reactions $f(s)=\mu s$. Sandstede-Scheel \cite{Spiral-waves2023} investigated the properties of rotating spirals for general reaction-diffusion equations (in unbounded domains) and provide some fundamental conceptual methods. 		The present work explores the nonlinear dynamics, providing a distinct perspective. In fact, we deal with the logistic reaction case $f(s)=\mu(1-s)$, and, contrary to \cite{susanna-2022},  we make use of a multi-parameter bifurcation theorem.
		
Rigidly rotating waves are stationary solutions in a uniformly rotating frame, that is
		\begin{equation}\label{rotate-ansatz}
			u_i(\textbf{x},t)=u_i(R_{\omega t}\textbf{x}),\quad i=1,2,3,\ 
		\text{where }\ 
			R_{\omega t}\textbf{x}=\begin{pmatrix}
				\cos (\omega t)& -\sin (\omega t)\\
				\sin (\omega t)& \cos(\omega t)
			\end{pmatrix}\begin{pmatrix}
				x_1\\x_2
			\end{pmatrix}.
		\end{equation}
		The $R_{\omega t}$ is called the rotation matrix of angular speed $\omega$. 
		After inserting \eqref{rotate-ansatz}
		into the system \eqref{system-t}, we obtain 
		\begin{equation}\label{sys-ome}
			\begin{cases}
				-\Delta u_1+\omega \textbf{x}^{\bot}\cdot \nabla u_1=\mu u_1(1-u_1)-\beta \alpha u_1u_2-\beta \gamma u_1 u_3,& \text{in}\ B_1,\\
				-\Delta u_2+\omega  \textbf{x}^{\bot}\cdot \nabla u_2=\mu u_2(1-u_2)-\beta \gamma u_1u_2-\beta \alpha u_2 u_3,& \text{in}\ B_1,\\
				-\Delta u_3+\omega  \textbf{x}^{\bot}\cdot \nabla u_3=\mu u_3(1-u_3)-\beta \alpha u_1u_3-\beta \gamma u_2 u_3,& \text{in}\ B_1,
			\end{cases}
		\end{equation}
with Neumann or Dirichlet boundary conditions.  
Since 	$\textbf{x}^{\bot}\cdot \nabla u_i=\frac{\partial u_i(r,\theta)}{\partial \theta}$ in polar coordinates, if $u_i$ is radial, then $\textbf{x}^{\bot}\cdot \nabla u_i=0$ and $\omega$ disappears.  
To obtain authentic rotating waves for the system \eqref{sys-ome}, our goal is to establish the existence of non-radial positive solutions. Therefore, it is essential to prove both the existence of positive solutions and their non-radial nature. If we succeed in finding a non-radial positive solution for \eqref{sys-ome}, we can assert the existence of rotating waves for the original system \eqref{system-t} through the relation \eqref{rotate-ansatz}.

To state our main results, we first introduce some notations that we will use
throughout the paper.
\begin{itemize}
	\item Define 	\begin{equation*}
		\begin{aligned}
			L_{\omega}u:=-\Delta u+\omega  \textbf{x}^{\bot}\cdot \nabla u=-\Delta u+\omega (-x_2\partial_1u+x_1\partial_2u).
		\end{aligned}
	\end{equation*}
	\item  $ -\Delta_N$ (resp. $ -\Delta_D$ )
	denotes the operator $-\Delta$ in a ball with the Neumann (resp. Dirichlet) boundary condition.
	\item $\lambda_{n}$ denotes a  positive eigenvalue of $-\Delta_N$, where $n=1, 2, \cdots$. 
	\item $f_{n,k}(r)$  is the solution of 
	\begin{equation*}
			f''(r)+\frac{1}{r}f'(r)+(\lambda_n -\frac{k^2}{r^2})f(r)=0, \quad f'(0)=f'(1)=0,\quad r\in(0,1).
	\end{equation*}
	\item  Recall that the spectrum $\sigma(T)$ of a linear operator $T$ consists of three parts: point spectrum $\sigma_p(T)$, continuous spectrum $\sigma_c(T)$, and residual spectrum $\sigma_r(T)$ (see \cite[Definition 7.2-1]{FA-Kreyszig}, \cite[p209]{FA-Yosida}). 
	The point spectrum $\sigma_p(T)$ is the set of eigenvalues. 
\end{itemize}
		Ikeno-Shigekawa \cite{IS-2009} described the spectrum of $-\Delta+\textbf{x}^{\bot}\cdot \nabla$ on the entire  space $\mathbb{R}^2$. To obtain the existence of rotating waves, 	we start by directly studying the spectrum and eigenspaces of  $L_{\omega}$ in the unit ball.   Indeed,  the spectrum and eigenspaces for $L_{\omega}$ are directly related to the ones of the Laplacian (with Neumann boundary conditions).
		\begin{prop}\label{spectrum-countable}
			For the operator $L_{\omega}$ with Neumann boundary condition,
			the spectrum $\sigma(L_{\omega})$ is a  countable  set.  More precisely, 
			\begin{equation*}
				\sigma(L_{\omega})= \sigma_{p}(L_{\omega})=\{0, \lambda_{n}+i\omega k: k\in \mathbb{Z}, \lambda_{n} \  \mbox{is\ the\  positive\ eigenvalue\ of\ } -\Delta_N \}.
			\end{equation*}
			Furthermore, \begin{itemize}
				\item[(i)] Every real eigenvalue is simple, i.e. $\dim\big(\cup_{m=1}^{\infty}\ker (L_{\omega}-\lambda)^m\big)=1$. Moreover, the corresponding eigenfunctions are radial.
				\item[(ii)] Every complex eigenvalue is double, i.e. $\ker(L_{\omega}-\lambda_{n}-\mathrm{i}k\omega)=\text{span}\{f_{n,k}(r)\mathrm{e}^{\mathrm{i}k\theta}, \mathrm{i}k f_{n,k}(r)\mathrm{e}^{\mathrm{i}k\theta}\}$.
				\item[(iii)] If $f_{n,k}(r)\mathrm{e}^{\mathrm{i}k\theta}$ is the eigenfunction of $L_{\omega}$ with respect to $\lambda_{n}+\mathrm{i}\omega k$, then  $f_{n,k}(r)\mathrm{e}^{-\mathrm{i}k\theta}$ is the eigenfunction of $L_{-\omega}$ with respect to ${\lambda_n}+\mathrm{i}\omega k$.
			\end{itemize}
		\end{prop}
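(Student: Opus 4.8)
The plan is to exploit the rotational symmetry of $L_\omega$ and reduce the spectral problem to a family of one–dimensional Bessel–type eigenvalue problems, one per Fourier mode. Since $\textbf{x}^{\bot}\cdot\nabla=\partial_\theta$ commutes with $-\Delta$, the operator $L_\omega=-\Delta+\omega\partial_\theta$ leaves invariant each Fourier sector
$H_k:=\{g(r)\mathrm{e}^{\mathrm{i}k\theta}:\ g\in L^2((0,1),r\,\mathrm{d}r)\}$, $k\in\mathbb Z$, and $L^2(B_1;\mathbb C)=\bigoplus_{k}H_k$. Writing $u=g(r)\mathrm{e}^{\mathrm{i}k\theta}$, the eigenvalue equation $L_\omega u=\sigma u$ becomes
\[
-g''-\frac{1}{r}g'+\frac{k^2}{r^2}g=(\sigma-\mathrm{i}\omega k)\,g,\qquad g\ \text{bounded near }0,\ \ g'(1)=0 ,
\]
a second–order ODE with one condition at each endpoint. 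Its regular/singular solutions are $J_{|k|}(\sqrt\nu\,r)$ and $Y_{|k|}(\sqrt\nu\,r)$, so a nontrivial solution exists exactly when $\nu:=\sigma-\mathrm{i}\omega k$ is a Neumann eigenvalue of $-\Delta$ carried by the $k$‑th mode (i.e.\ $\nu=0$, possible only for $k=0$, or $\nu=\lambda_n$ with $f_{n,k}\not\equiv0$), and then the solution space is one‑dimensional, spanned by $f_{n,k}(r)$. This already yields $\sigma_p(L_\omega)=\{0\}\cup\{\lambda_n+\mathrm{i}\omega k\}$ — once we know $\sigma(L_\omega)=\sigma_p(L_\omega)$.

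To obtain $\sigma(L_\omega)=\sigma_p(L_\omega)$ and discreteness (hence countability), I would use the common orthonormal basis $\{f_{n,k}(r)\mathrm{e}^{\mathrm{i}k\theta}\}$, which simultaneously diagonalises $-\Delta_N$ (eigenvalue $\lambda_n$) and $\partial_\theta$ (eigenvalue $\mathrm{i}k$); thus $L_\omega$, with domain $D(-\Delta_N)$, acts diagonally with eigenvalues $\lambda_n+\mathrm{i}\omega k$, so it is a normal operator and $\sigma(L_\omega)$ is the closure of $\{0\}\cup\{\lambda_n+\mathrm{i}\omega k\}$. Since $\lambda_n\to\infty$ while only finitely many modes $k$ are attached to a given $\lambda_n$, this set has no finite accumulation point and is already closed. (Equivalently: $\textbf{x}^{\bot}\cdot\nabla$ is relatively bounded with respect to $-\Delta_N$ with relative bound $0$, because $|\textbf{x}^{\bot}|\le1$ on $B_1$ and $\|\nabla u\|^2\le\varepsilon\|\Delta u\|^2+C_\varepsilon\|u\|^2$; hence $L_\omega$ is closed and m‑sectorial, $\mathrm{Re}\,\langle L_\omega u,u\rangle=\|\nabla u\|_{L^2}^2\ge0$ gives $\{\mathrm{Re}\,z<0\}\subset\rho(L_\omega)$, and $D(L_\omega)=D(-\Delta_N)\hookrightarrow\hookrightarrow L^2$ makes the resolvent compact.)

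For the multiplicity statements, note that $L_\omega$ commutes with the projections $P_k$ onto $H_k$, so $\ker(L_\omega-\sigma)^m=\bigoplus_k\ker\big((L_\omega-\sigma)|_{H_k}\big)^m$ for every $m\ge1$. If $\sigma$ is real (here $\omega\neq0$), the summand with $k\neq0$ is trivial since $(L_\omega-\sigma)|_{H_k}$ has spectrum avoiding $0$ (it has nonzero imaginary part $\omega k$), while on $H_0$ the operator $L_\omega|_{H_0}=-\Delta_N|_{H_0}$ is self‑adjoint, so its generalised kernel equals its kernel, of dimension $\le1$; hence the real eigenvalues are precisely $0$ and the positive radial Neumann eigenvalues, algebraically simple and with radial eigenfunctions — this is (i). For a non‑real $\sigma=\lambda_n+\mathrm{i}\omega k$, $k\neq0$, the only nontrivial summand is $H_k$ (the others are invertible), where $(L_\omega-\sigma)|_{H_k}=-\Delta_N|_{H_k}-\lambda_n$ has one‑dimensional kernel $\mathrm{span}\{f_{n,k}(r)\mathrm{e}^{\mathrm{i}k\theta}\}$; viewing $L_\omega$ as a real operator, the conjugate pair $\lambda_n\pm\mathrm{i}\omega k$ corresponds to the two‑dimensional real invariant subspace $\mathrm{span}_{\mathbb R}\{f_{n,k}(r)\cos(k\theta),\,f_{n,k}(r)\sin(k\theta)\}=\mathrm{span}\{f_{n,k}(r)\mathrm{e}^{\mathrm{i}k\theta},\,\mathrm{i}k\,f_{n,k}(r)\mathrm{e}^{\mathrm{i}k\theta}\}$ — this is (ii). Finally, (iii) follows by conjugating with the reflection $\rho:u(r,\theta)\mapsto u(r,-\theta)$: since $\rho(-\Delta)\rho=-\Delta$ and $\rho\,\partial_\theta\,\rho=-\partial_\theta$, one gets $\rho L_\omega\rho=L_{-\omega}$, so $\rho$ sends the $\sigma$‑eigenfunction $f_{n,k}(r)\mathrm{e}^{\mathrm{i}k\theta}$ of $L_\omega$ to the $\sigma$‑eigenfunction $f_{n,k}(r)\mathrm{e}^{-\mathrm{i}k\theta}$ of $L_{-\omega}$.

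The genuinely delicate step is ruling out continuous and residual spectrum for the non‑self‑adjoint $L_\omega$, i.e.\ justifying $\sigma(L_\omega)=\sigma_p(L_\omega)$; the normality argument (or the m‑sectorial/compact‑resolvent argument) is the crux. Everything else is bookkeeping around the separation of variables and elementary facts about Bessel functions and the simplicity of the zeros of $J_{|k|}'$. One should also be mildly careful about which Neumann eigenvalues $\lambda_n$ actually occur for a given $k$ (equivalently, the convention that $f_{n,k}\equiv0$ exactly when $\lambda_n+\mathrm{i}\omega k\notin\sigma_p(L_\omega)$), and about the sense in which a non‑real eigenvalue is called ``double'' (one‑dimensional complex eigenspace / two‑dimensional real $SO(2)$‑invariant subspace).
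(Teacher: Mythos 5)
Your proposal is correct, and it follows the same skeleton as the paper (separation of variables into Fourier sectors $H_k$, reduction to the Bessel-type radial problem with Neumann condition at $r=1$, identification of the candidate eigenvalues $\lambda_n+\mathrm{i}\omega k$). Where you genuinely diverge is at the crux, namely the proof that $\sigma(L_\omega)=\sigma_p(L_\omega)$: the paper constructs the resolvent mode by mode, solving $(-\Delta+\mathrm{i}\omega k-\lambda)(u_k\mathrm{e}^{\mathrm{i}k\theta})=f_k\mathrm{e}^{\mathrm{i}k\theta}$ in each sector and then invoking a cited uniform-in-$k$ resolvent estimate ($C=C(\operatorname{Re}\lambda)$ independent of $k$) to sum the series, whereas you observe that the joint eigenbasis $\{f_{n,k}(r)\mathrm{e}^{\mathrm{i}k\theta}\}$ of $-\Delta_N$ and $\partial_\theta$ diagonalises $L_\omega$, making it normal with discrete spectrum (or, alternatively, m-sectorial with compact resolvent since $\textbf{x}^{\bot}\cdot\nabla$ is $-\Delta_N$-bounded with relative bound $0$). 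Your route is more structural and self-contained -- it replaces the external uniform resolvent bound by the elementary fact that a diagonal operator's spectrum is the closure of its diagonal, plus the observation that $\{\lambda_n+\mathrm{i}\omega k\}$ has no finite accumulation point -- at the cost of having to verify closedness of $L_\omega$ on $D(-\Delta_N)$ and that the diagonal action on the basis determines the operator; the paper's route is shorter on the page but outsources the hard estimate. A further point in your favour: the paper's written proof stops at the spectrum identity and never argues items (i)--(iii) explicitly, while your sector-wise decomposition $\ker(L_\omega-\sigma)^m=\bigoplus_k\ker\big((L_\omega-\sigma)|_{H_k}\big)^m$ (invertibility on $H_k$ for $k\neq0$ when $\sigma\in\mathbb{R}$, self-adjointness on $H_0$, one-dimensional complex kernel in $H_k$ for $\sigma=\lambda_n+\mathrm{i}\omega k$) and the reflection $\theta\mapsto-\theta$ conjugating $L_\omega$ to $L_{-\omega}$ supply clean proofs of simplicity, of the ``double'' (real two-dimensional) eigenspace, and of (iii). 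Your closing caveats -- which $\lambda_n$ actually carry a $k$-mode, and the real-versus-complex count behind the word ``double'' -- are exactly the right points to flag.
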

		%
		%
		%
		We state the existence of rotating waves for the Neumann boundary condition below. 
		\begin{theorem}\label{Neumann}
			Consider the Neumann problem:
				\begin{equation*}
				\begin{cases}
					L_{\omega}u_1=\mu u_1(1-u_1)-\beta \alpha u_1u_2-\beta \gamma u_1 u_3,& \text{in}\ B_1,\\
					L_{\omega}u_2=\mu u_2(1-u_2)-\beta \gamma u_1u_2-\beta \alpha u_2 u_3,& \text{in}\ B_1,\\
					L_{\omega}u_3=\mu u_3(1-u_3)-\beta \alpha u_1u_3-\beta \gamma u_2 u_3,& \text{in}\ B_1,\\
					\frac{\partial u_1}{\partial n}=	\frac{\partial u_2}{\partial n}=	\frac{\partial u_3}{\partial n}=0,&\text{on}\ \partial B_1,
				\end{cases}
			\end{equation*}
				where $\mu>0, \beta>0$ and  $\alpha>\gamma>0$. Then the following statements hold.
				\begin{itemize}
				\item[(i)] $\frac{\mu}{\mu+\beta\alpha+\beta\gamma}(1,1,1)$ is the unique positive constant solution. 
				\item[(ii)] Given fixed values for the parameters $\mu$, $\alpha$, and $\gamma$,  we assert that  $\big(\frac{\mu}{\mu+\beta\alpha+\beta\gamma}(1,1,1),  (\beta,\mu)\big)$ is a bifurcation point for any positive  $(\beta, \mu)$ satisfying
				\begin{equation}\label{beta-star}
			\mu=\sqrt{3}k\omega\frac{\alpha+\gamma}{\alpha-\gamma}-\lambda_n,\quad		\beta=\frac{1}{\alpha+\gamma}\frac{2\mu(\mu+\lambda_n)}{\mu-2\lambda_n},
				\end{equation}
				 with $k\in\{3m-1: m\in\mathbb{N}\}$. Moreover, there exist positive rotating waves in the solution curve.  
				\item[(iii)] For  fixed values of  $\mu,\alpha,\gamma$,  it holds that $\big(\frac{\mu}{\mu+\beta\alpha+\beta\gamma}(1,1,1), (\beta,\omega)\big)$ is a bifurcation point for  all $(\beta,\omega)$ satisfying 
				\begin{equation}\label{beta-star2}
					\omega=	\frac{\sqrt{3}}{3k}\frac{\alpha-\gamma}{\alpha+\gamma}(\mu+\lambda_n),\quad	\beta=\frac{1}{\alpha+\gamma}\frac{2\mu(\mu+\lambda_n)}{\mu-2\lambda_n}, 
				\end{equation}
				 with $k\in\{3m-1: m\in\mathbb{N}\}$. Furthermore,  positive rotating waves exist in the solution curve.
				 \item[(iv)] The solution curve  in $(ii)$ (resp. $(iii)$) is of the form 
			$ \textbf{u}(s)=\frac{\mu(s)}{\mu(s)+\beta(s)\alpha+\beta(s)\gamma}(1,1,1)+s\textbf{h}_0+\psi\big(s\textbf{h}_0, (\beta(s),\mu(s)\big), $ (resp. $ \textbf{u}(s)=\frac{\mu}{\mu+\beta(s)\alpha+\beta(s)\gamma}(1,1,1)+s\textbf{h}_0+\psi\big(s\textbf{h}_0, (\beta(s),\omega(s)\big)$) with 
			$s\in(-\delta,\delta)$ for some small $\delta>0$. And the direction of bifurcation is 	  \begin{equation*}
			   \textbf{h}_0=f_{n,k}(r)\begin{pmatrix}
		-\cos k\theta+\sqrt{3}\sin k\theta\\
		-\cos k\theta-\sqrt{3}\sin k\theta\\
		2\cos k\theta
		\end{pmatrix},
				 \end{equation*}
				 which is a non-radial, non-trivial function. Therefore, we obtain authentic rotating waves.
				 \item[(v)] The solution $\frac{\mu}{\mu+\beta\alpha+\beta\gamma}(1,1,1)$ is unstable if $\beta>\frac{2\mu}{\alpha+\gamma}$.
				\end{itemize} 
In addition, 		the positive rotating waves satisfy the property	$u_2(\textbf{x})=u_1(R_{\frac{2\pi}{3}}\textbf{x}), \  u_3(\textbf{x})=u_2(R_{\frac{2\pi}{3}}\textbf{x}).$
		\end{theorem}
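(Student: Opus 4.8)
The plan is to dispatch Theorem~\ref{Neumann} item by item: (i) is a linear-algebra computation, (ii)--(iv) come from a Lyapunov--Schmidt reduction tuned to the symmetries of the system together with a multi-parameter (Crandall--Rabinowitz-type) bifurcation argument, and (v) is the principle of linearized instability. Throughout I will use the spectral description of $L_\omega$ from Proposition~\ref{spectrum-countable}.

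\textbf{Step 1: item (i) and the functional setting.} A positive constant triple $(u_1,u_2,u_3)$ satisfies, after dividing the $i$-th equation by $u_i>0$, the linear system $M'(u_1,u_2,u_3)^\top=\mu(1,1,1)^\top$, where $M'$ is the circulant matrix $\mu I+\beta\alpha P+\beta\gamma P^2$ ($P$ the cyclic shift). Its eigenvalues are $\mu+\beta\alpha\zeta+\beta\gamma\zeta^2$ over $\zeta^3=1$; for $\zeta=e^{\pm2\pi i/3}$ they have imaginary part $\pm\tfrac{\sqrt3}{2}\beta(\alpha-\gamma)\neq0$ since $\alpha>\gamma$, so $M'$ is invertible and the only positive constant solution is $\mathbf{u}_c:=\tfrac{\mu}{\mu+\beta\alpha+\beta\gamma}(1,1,1)$. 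For the bifurcation I write the Neumann problem as $F(\mathbf{u},\beta,\mu)=0$ (for (ii), with $\omega,\alpha,\gamma$ fixed) or $F(\mathbf{u},\beta,\omega)=0$ (for (iii), with $\mu,\alpha,\gamma$ fixed) between Hölder spaces of triples with the Neumann condition; $F$ is real-analytic, the trivial branch is $F(\mathbf{u}_c,\beta,\mu)\equiv0$, $F$ commutes with spatial rotations $\mathbf{u}\mapsto\mathbf{u}\circ R_\phi$, and $F$ preserves the $\mathbb{Z}_3$-equivariant subspace $X^\sharp=\{(v,\,v\circ R_{2\pi/3},\,v\circ R_{4\pi/3})\}$. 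I will run the whole bifurcation inside $X^\sharp$ (equivalently, as I note below, the kernel already lies in $X^\sharp$): there the system reduces to a single scalar nonlocal equation for $v=u_1$, and any solution obtained is automatically of the form $u_2=u_1\circ R_{2\pi/3}$, $u_3=u_2\circ R_{2\pi/3}$, which is the final claim of the theorem.

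\textbf{Step 2: the linearization and its two-dimensional kernel.} Linearizing at $\mathbf{u}_c$ gives $D_{\mathbf{u}}F(\mathbf{u}_c,\beta,\mu)=L_\omega(\cdot)+cM'(\cdot)$ with $c=\tfrac{\mu}{\mu+\beta(\alpha+\gamma)}$, the quadratic terms collapsing because $c(\mu+\beta(\alpha+\gamma))=\mu$. On $X^\sharp$ the operator acts on $v=f_{n,k}(r)e^{ik\theta}$ (with $f_{n,k},\lambda_n$ as in Proposition~\ref{spectrum-countable}) by multiplication by $\lambda_n+ik\omega+c\big(\mu+\beta\alpha e^{2\pi ik/3}+\beta\gamma e^{4\pi ik/3}\big)$, which for $k\in\{3m-1:m\in\mathbb{N}\}$ equals $\Lambda(\beta,\mu):=\lambda_n+ik\omega+c\bar\kappa$ with $\bar\kappa=\mu-\tfrac{\beta(\alpha+\gamma)}{2}-i\tfrac{\sqrt3}{2}\beta(\alpha-\gamma)$. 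Thus the kernel is nontrivial exactly when $\mathrm{Re}\,\Lambda=\lambda_n+c\big(\mu-\tfrac{\beta(\alpha+\gamma)}{2}\big)=0$ and $\mathrm{Im}\,\Lambda=k\omega-c\tfrac{\sqrt3}{2}\beta(\alpha-\gamma)=0$; substituting $c$ in terms of $\beta,\mu$ turns this pair of equations into precisely \eqref{beta-star} (or, solved for $\omega$, into \eqref{beta-star2}), with the constraint $\mu>2\lambda_n$ forcing $\beta>0$. At such a point the real kernel is two-dimensional, spanned by the $X^\sharp$-lifts of $f_{n,k}\cos k\theta$ and $f_{n,k}\sin k\theta$; the displayed direction $\mathbf{h}_0$ is one such element (the lift of $f_{n,k}(-\cos k\theta+\sqrt3\sin k\theta)$), and the residue condition $k=3m-1$ is exactly what makes this lift a legitimate element of $X^\sharp$. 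No other angular mode contributes: $\mathrm{Im}\,\Lambda=0$ together with $\omega\neq0$ excludes $k'=3m'-1$ with $m'\neq m$, and the radial profile $f_{n,k}$ is unique; hence $\dim\ker=\mathrm{codim}\,\mathrm{Range}=2$.

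\textbf{Step 3: multi-parameter bifurcation (the main obstacle).} Since $F$ is $SO(2)$-equivariant and the kernel is the planar representation of weight $k$, a Lyapunov--Schmidt reduction produces a bifurcation map $\Phi(t,\beta,\mu)=\big(a(|t|^2,\beta,\mu)I+b(|t|^2,\beta,\mu)J\big)t$, $t\in\mathbb{R}^2$, $J$ the rotation by $\pi/2$, with $a,b$ smooth, $a(0,\beta_0,\mu_0)=\mathrm{Re}\,\Lambda(\beta_0,\mu_0)=0$, $b(0,\beta_0,\mu_0)=\mathrm{Im}\,\Lambda(\beta_0,\mu_0)=0$. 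Because $aI+bJ$ is invertible unless $a=b=0$, the nontrivial solutions form the zero set of the two scalar equations $a(|t|^2,\beta,\mu)=0$, $b(|t|^2,\beta,\mu)=0$ in the three unknowns $(|t|^2,\beta,\mu)$. The step I expect to be hardest is the verification of the transversality condition: the $2\times2$ Jacobian $\partial(\mathrm{Re}\,\Lambda,\mathrm{Im}\,\Lambda)/\partial(\beta,\mu)$ (respectively $\partial(\cdot)/\partial(\beta,\omega)$ for (iii)) must be invertible at the bifurcation values, which means differentiating $\Lambda=\lambda_n+ik\omega+c(\beta,\mu)\,\bar\kappa(\beta,\mu)$ through the $(\beta,\mu)$-dependence of both $c$ and $\bar\kappa$ and checking that the resulting determinant does not vanish along \eqref{beta-star}; the non-degeneracy built into the statement (notably $\mu\neq2\lambda_n$) should yield this. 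Granting transversality, the implicit function theorem solves $a=b=0$ for $(\beta(s),\mu(s))$ as functions of $s=|t|$ small, giving the solution curve $\mathbf{u}(s)=\mathbf{u}_c(\beta(s),\mu(s))+s\mathbf{h}_0+\psi\big(s\mathbf{h}_0,(\beta(s),\mu(s))\big)$ with $\psi=o(s)$ and vanishing $s$-derivative at $0$. Since $\mathbf{u}_c$ is a strictly positive constant and $\mathbf{h}_0\in C^\infty$ is bounded, $\mathbf{u}(s)$ is positive for $|s|$ small; since $\mathbf{h}_0$ genuinely depends on $\theta$ ($k\geq2$) while the correction vanishes to higher order, $\mathbf{u}(s)$ is \emph{non-radial} for small $s\neq0$, so by \eqref{rotate-ansatz} (with $\omega\neq0$) it produces genuinely time-periodic rotating waves of \eqref{system-t}. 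This proves (ii)--(iv); the $SO(2)$-equivariance, which collapses $\Phi$ into the two scalar functions $a,b$, is what makes this multi-parameter bifurcation tractable.

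\textbf{Step 4: item (v).} Linearize the parabolic system \eqref{system-t} at $\mathbf{u}_c$ (the rotation term is absent there): $\partial_t\mathbf{v}+(-\Delta_N)\mathbf{v}+cM'\mathbf{v}=0$, whose generator has eigenvalues $-\lambda-c\nu$ as $\lambda$ runs over $\sigma(-\Delta_N)=\{0\}\cup\{\lambda_n\}$ and $\nu$ over the eigenvalues $\{\mu+\beta(\alpha+\gamma),\kappa,\bar\kappa\}$ of $M'$. Taking $\lambda=0$ and $\nu=\kappa$ gives the eigenvalue $-c\kappa$, whose real part $c\big(\tfrac{\beta(\alpha+\gamma)}{2}-\mu\big)$ is strictly positive precisely when $\beta>\tfrac{2\mu}{\alpha+\gamma}$ (its imaginary part being nonzero, the instability is oscillatory). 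By the principle of linearized instability, $\mathbf{u}_c$ is unstable in that regime, which is (v).
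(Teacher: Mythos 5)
Your overall architecture matches the paper's: item (i) via invertibility of the circulant interaction matrix (the paper uses Cramer's rule plus an explicit positivity computation of the determinant, you use the circulant eigenvalues $\mu+\beta\alpha\zeta+\beta\gamma\zeta^2$ — both fine); the kernel computation via diagonalization of the circulant part and Proposition \ref{spectrum-countable}, with the same condition $k\equiv 2\ (\mathrm{mod}\ 3)$ for membership in the symmetric subspace; and item (v) by linearized instability at the constant state, which agrees with the paper's computation. Your Step 3 is a repackaging of the paper's argument: the paper invokes Kielh\"ofer's multi-parameter bifurcation theorem and verifies its hypothesis that $L_{\beta}(\beta_0,\mu_0)\textbf{h}_0$ and $L_{\mu}(\beta_0,\mu_0)\textbf{h}_0$ span a complement of the range, which — after projecting onto the mode $f_{n,k}(r)\mathrm{e}^{\mathrm{i}k\theta}$ — is exactly your condition that the $2\times 2$ Jacobian $\partial(\mathrm{Re}\,\Lambda,\mathrm{Im}\,\Lambda)/\partial(\beta,\mu)$ be invertible.

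The genuine gap is that you never carry out this verification: you write that the transversality ``should'' follow from the non-degeneracy in the statement, ``notably $\mu\neq 2\lambda_n$,'' but that is both unproven and misattributed. The paper computes the relevant determinant explicitly and finds
\begin{equation*}
\begin{vmatrix}
-3\mu_0^2(\alpha+\gamma)&\sqrt{3}\mu_0^2(\alpha-\gamma)\\
2\mu_0^2+4\beta_0\mu_0(\alpha+\gamma)-\beta_0^2(\alpha+\gamma)^2&\sqrt{3}\beta_0^2(\alpha^2-\gamma^2)
\end{vmatrix}
=-2\sqrt{3}\,\mu_0^2(\alpha-\gamma)(\mu_0+\beta_0\alpha+\beta_0\gamma)^2,
\end{equation*}
which is nonzero because $\alpha>\gamma$ — the asymmetry of the competition rates, not the condition $\mu\neq 2\lambda_n$, is what makes the bifurcation non-degenerate. (For item (iii) the analogous check is that $p_{\beta_0,\mu_0}$ and $\mathrm{i}k$ are $\mathbb{R}$-linearly independent, which holds since $\mathrm{Re}\,p_{\beta_0,\mu_0}\neq 0$.) Since this determinant is the single hypothesis of the multi-parameter bifurcation theorem that does not follow from soft arguments, leaving it unverified — and pointing at the wrong source of non-degeneracy — means the core of items (ii)--(iv) is not actually established in your write-up. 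The rest of Step 3 (positivity and non-radiality of $\textbf{u}(s)$ for small $s\neq 0$, hence authenticity of the rotating waves) is correct and matches the paper.
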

		\begin{remark}
			$(i)$ Both of  \eqref{beta-star} and \eqref{beta-star2} imply that 
			$\beta>\frac{2\mu}{\alpha+\gamma}$ and $\mu>2\lambda_{n}$. 
		$	(ii)$
			If $\mu$ is close to $2\lambda_n$, the $\beta$ in \eqref{beta-star2} could be arbitrarily large.
			$(iii)$ By \cite[Remark I.19.7]{bifurcation-2012}, 
			in truth, we obtain a $2$-dimensional surface of nontrivial rigidly rotating waves. 
			(iv) By continuity, close to the bifurcation point, such waves are unstable. And by the analysis in \cite{SSW-1997, SSW-1997-2}, for any $(\beta,\mu)$ sufficiently close to the parameter satisfying \eqref{beta-star}, there exists an $SO(2,\mathbb{R})$-invariant, locally flow-invariant manifold $M^{cu}_{\beta,\mu}$ contained in $C(\overline{B}_{1})$, where $SO(2,\mathbb{R})$ is the group formed by rotation matrices. The manifold $M^{cu}_{\beta,\mu}$ and the action of $SO(2,\mathbb{R})$ on $M^{cu}_{\beta,\mu}$ are smooth and depend smoothly on the parameter $(\beta,\mu)$. Furthermore, $M^{cu}_{\beta,\mu}$ contains all solutions that stay close to the group orbit of the rotating wave for all negative times. Finally, $M^{cu}_{\beta,\mu}$ is locally exponentially attracting.
		\end{remark}

	The subsequent theorem demonstrates the existence of rotating waves for the non-homogeneous Dirichlet problems.
\begin{theorem}\label{Dirichlet}
		There exists at least a positive solution for the non-homogeneous Dirichlet problem:
	\begin{equation*}
		\begin{cases}
			L_{\omega}u_1=\mu u_1(1-u_1)-\beta \alpha u_1u_2-\beta \gamma u_1 u_3,& \text{in}\ B_1,\\
			L_{\omega}u_2=\mu u_2(1-u_2)-\beta \gamma u_1u_2-\beta \alpha u_2 u_3,& \text{in}\ B_1,\\
			L_{\omega}u_3=\mu u_3(1-u_3)-\beta \alpha u_1u_3-\beta \gamma u_2 u_3,& \text{in}\ B_1,\\
			(u_1,u_2,u_3)=(\phi_{1},\phi_{2},\phi_{3}),&\text{on}\ \partial B_1,
		\end{cases}
	\end{equation*}
	where $\mu,  \beta>0$, $\alpha>\gamma>0$,  and 
	\begin{equation*}
	\phi_{i}\in C(\partial B_{1}), 0\le \phi_{i}(\textbf{x})< 1, \phi_{i}\not\equiv 0,  \phi_{2}(\textbf{x})=\phi_{1}(R_{\frac{2\pi}{3}}\textbf{x}), \ \phi_{3}(\textbf{x})=\phi_{2}(R_{\frac{2\pi}{3}}\textbf{x}), \ i=1, 2, 3.
	\end{equation*}
	 Furthermore, 		the positive rotating wave satisfies	$u_2(\textbf{x})=u_1(R_{\frac{2\pi}{3}}\textbf{x}), \  u_3(\textbf{x})=u_2(R_{\frac{2\pi}{3}}\textbf{x}).$
\end{theorem}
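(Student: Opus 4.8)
The plan is to exploit the cyclic $\mathbb{Z}_3$-symmetry to reduce the $3\times 3$ system to a single scalar equation, and then to solve that equation by a fixed point argument in the order interval of $C(\overline{B}_1)$. \emph{Step 1: symmetry reduction.} In polar coordinates $L_\omega=-\Delta+\omega\,\partial_\theta$, hence $L_\omega$ commutes with the isometry $v\mapsto v\circ R_\phi$ of $C(\overline{B}_1)$ for every rotation $R_\phi$, and $R_{2\pi/3}$ has order $3$. Given the boundary hypotheses $\phi_2=\phi_1\circ R_{2\pi/3}$, $\phi_3=\phi_1\circ R_{4\pi/3}$, composing the scalar equation below with $R_{2\pi/3}$, respectively $R_{4\pi/3}$, and using the commutation shows that if $u$ solves
\begin{equation*}
L_\omega u=\mu u(1-u)-\beta\alpha\,u\,(u\circ R_{2\pi/3})-\beta\gamma\,u\,(u\circ R_{4\pi/3})\ \text{ in }B_1,\qquad u=\phi_1\ \text{ on }\partial B_1,
\end{equation*}
then $(u_1,u_2,u_3):=\bigl(u,\ u\circ R_{2\pi/3},\ u\circ R_{4\pi/3}\bigr)$ solves the full Dirichlet system and automatically satisfies $u_2=u_1\circ R_{2\pi/3}$, $u_3=u_2\circ R_{2\pi/3}$. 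It therefore suffices to produce a solution $u$ of the scalar problem with $0\le u\le 1$ and $u\not\equiv 0$.

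\emph{Step 2: the fixed point operator.} Fix a constant $c\ge\max\{\mu,\ \beta(\alpha+\gamma)\}$ and, for $v\in K:=\{v\in C(\overline{B}_1):0\le v\le 1\}$, set
\begin{equation*}
g_v:=\mu v(1-v)+cv-\beta\alpha\,v\,(v\circ R_{2\pi/3})-\beta\gamma\,v\,(v\circ R_{4\pi/3})\in L^\infty(B_1).
\end{equation*}
Since $\textbf{x}^{\bot}\cdot\nabla$ is a bounded drift on $B_1$ (with $\div\,\textbf{x}^{\bot}=0$) and $c>0$, the operator $L_\omega+c$ satisfies the maximum principle, and the linear Dirichlet problem $(L_\omega+c)w=g_v$ in $B_1$, $w=\phi_1$ on $\partial B_1$, has a unique solution $Tv\in C(\overline{B}_1)\cap W^{2,p}_{\mathrm{loc}}(B_1)$ for every $p<\infty$ (by $L^p$-elliptic theory together with Perron's method/barriers, available since $\overline{B}_1$ is smooth). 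Writing $Tv=w_0+S(g_v)$, where $w_0$ is the fixed $(L_\omega+c)$-harmonic function with boundary data $\phi_1$ and $S:L^\infty(B_1)\to W^{2,p}(B_1)\hookrightarrow C^{1,1/2}(\overline{B}_1)$ is the bounded solution operator with zero boundary data, one sees that $T:K\to C(\overline{B}_1)$ is continuous and that $T(K)$ — a fixed function plus a set bounded in $C^{1,1/2}(\overline{B}_1)$ — is relatively compact in $C(\overline{B}_1)$; thus $T$ is compact.

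\emph{Step 3: invariance and conclusion.} Two applications of the maximum principle give $T(K)\subseteq K$. First, for $v\in K$,
\begin{equation*}
g_v=v\bigl[\mu(1-v)+c-\beta\alpha\,(v\circ R_{2\pi/3})-\beta\gamma\,(v\circ R_{4\pi/3})\bigr]\ge 0
\end{equation*}
because $0\le v\le 1$ and $c\ge\beta(\alpha+\gamma)$; together with $\phi_1\ge 0$ this yields $Tv\ge 0$. Second, the two competition terms in $g_v$ are nonpositive, so pointwise $g_v\le v\bigl[\mu(1-v)+c\bigr]\le\max_{a\in[0,1]}a\bigl[\mu(1-a)+c\bigr]=c$ (here $c\ge\mu$ is used), i.e. $g_v\le c$, and comparison with the constant function $1$ (for which $(L_\omega+c)1\equiv c$), using $\phi_1<1$, gives $Tv\le 1$. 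Hence $T$ maps the nonempty, closed, bounded, convex set $K$ — which has nonempty interior in $C(\overline{B}_1)$, e.g. the constant $\tfrac12$ is interior — into itself; in particular $T(\partial K)\subseteq K$, so the Rothe fixed point theorem yields a fixed point $u\in K$. For this $u$ one has $0\le u\le 1$, so $g_u$ reduces to the genuine nonlinearity and $u$ solves the scalar problem of Step 1; moreover $(L_\omega+c)u=g_u\ge 0$ with $u\ge 0$ and $u|_{\partial B_1}=\phi_1\not\equiv 0$, so the strong maximum principle forces $u>0$ in $B_1$. Reassembling via Step 1 produces a positive rotating wave with the stated $R_{2\pi/3}$-symmetry.

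\emph{Main difficulty.} Every ingredient above is standard except the a priori bound $0\le Tv\le 1$, and the delicate point there is the upper bound: the crude estimate $g_v\le\mu/4+c$ does not beat $(L_\omega+c)1\equiv c$, so one must notice that the true pointwise maximum of $g_v$ over $[0,1]$-valued inputs equals exactly $c$ — a fact relying both on the sign of the competition coefficients and on the choice $c\ge\mu$ — which makes the constant $1$ a supersolution of the linearized problem. The only other thing to watch is that $K$ has nonempty interior, so that the convex-set form of Rothe's theorem applies; the verification that the scalar solution reassembles into a symmetric solution of the system is routine given that $L_\omega$ commutes with rotations.
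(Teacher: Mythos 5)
Your proof is correct and follows essentially the same route as the paper: restrict to the cyclically symmetric class (your scalar reduction is just the explicit form of the paper's symmetric space $\tilde E$, where the norm is the sup of the first component), shift the operator by a large constant so that the nonlinearity maps an order interval into nonnegative bounded data, get compactness from elliptic regularity, and conclude with the Rothe/Schauder fixed point theorem plus the strong maximum principle for positivity. The only differences are cosmetic: the paper shifts by $\mu+2\beta(\alpha+\gamma)$ and works on the interval $[0,2]$, where the supersolution comparison is immediate, whereas your choice of $[0,1]$ with $c\ge\max\{\mu,\beta(\alpha+\gamma)\}$ needs the slightly sharper observation that $\max_{a\in[0,1]}a\bigl[\mu(1-a)+c\bigr]=c$.
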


Our approach is outlined as follows. At first, we need to characterize the precise spectrum of the operator		$L_{\omega}=-\Delta +\omega \textbf{x}^{\bot}\cdot \nabla$.  As this operator is not self-adjoint, the spectrum   $\sigma(L_{\omega})$ forms a closed set that includes complex numbers.
Moving on to the Neumann problem, we establish the existence of rotating spirals through the applications of multi-parameter bifurcation theorems. This not only confirms the presence of rotating spirals but also serves as an illustrative example of applying multi-parameter bifurcation theorems to partial differential systems. In the case of the non-homogeneous Dirichlet problem, we employ the Rothe fixed point theorem to prove the existence of rotating spirals.  This part of our study is particularly noteworthy as it marks the first exploration of rotating spirals within competition-diffusion systems featuring a nonlinear $f$.

The organization of this paper unfolds as follows. In Section 2, we present the spectrum of $L_{\omega}$ and demonstrate the existence result for the Neumann problem, including the proof of  Proposition \ref{spectrum-countable} and Theorem \ref{Neumann}. Section 3 illustrates the existence result for the non-homogeneous Dirichlet problem, along with the proof for Theorem \ref{Dirichlet}.

	\section{The NBC system}

	We consider the nonlinear elliptic system
	\begin{equation}\label{system2}
		\begin{cases}
			L_{\omega}u_1=\mu u_1(1-u_1)-\beta \alpha u_1u_2-\beta \gamma u_1 u_3,& \text{in}\ B_1,\\
			L_{\omega}u_2=\mu u_2(1-u_2)-\beta \gamma u_1u_2-\beta \alpha u_2 u_3,& \text{in}\ B_1,\\
			L_{\omega}u_3=\mu u_3(1-u_3)-\beta \alpha u_1u_3-\beta \gamma u_2 u_3,& \text{in}\ B_1,\\
			\frac{\partial u_1}{\partial n}=	\frac{\partial u_2}{\partial n}=	\frac{\partial u_3}{\partial n}=0,&\text{on}\ \partial B_1.
		\end{cases}
	\end{equation}
	We want to prove the existence of nontrivial positive ($	u_i>0,\  \forall i=1,2,3$) symmetric solutions such that
	\begin{equation*}
		u_2(\textbf{x})=u_1(R_{\frac{2\pi}{3}}\textbf{x}), \  u_3(\textbf{x})=u_2(R_{\frac{2\pi}{3}}\textbf{x}).
	\end{equation*}
	
	\begin{proof}[Proof of Theorem \ref{Neumann}(i)]
			It is easy to observe that 
		\begin{equation*}
			(u_1,u_2, u_3)=\frac{\mu}{\mu+\beta \alpha+\beta \gamma}(1,1,1)
		\end{equation*}
		is the unique constant positive solution to \eqref{system2} by the Cramer's rule and Proposition \ref{det-positive}.
	\end{proof}

 \begin{lemma}
	Any positive solution of \eqref{system2} is  such that $0<u_i<1$ in $\overline{B_1}$ for any $i=1, 2, 3$.
\end{lemma}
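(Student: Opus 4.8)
The plan is to run the maximum principle on each equation, exploiting that $L_\omega=-\Delta+\omega\,\textbf{x}^{\bot}\cdot\nabla$ is uniformly elliptic with smooth, bounded coefficients and \emph{no} zeroth order term; the only work is to rewrite each equation so that a zeroth order coefficient of a favorable sign appears. I take a positive solution to mean $u_i\ge 0$, $u_i\not\equiv 0$ (classical, by elliptic bootstrap since the nonlinearities are polynomials in the $u_j$); in particular the argument does not presuppose strict positivity, which the strong maximum principle will recover in passing.

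\emph{Upper bound.} For, say, $i=1$ set $w:=u_1-1$. Since $L_\omega$ annihilates constants, the first equation becomes
\[
L_\omega w=-\mu u_1 w-\beta\alpha u_1u_2-\beta\gamma u_1u_3\le -\mu u_1 w\quad\text{in }B_1,
\]
using $u_1,u_2,u_3\ge 0$; that is, $(L_\omega+\mu u_1)w\le 0$ with $\mu u_1\ge 0$, together with $\partial_n w=0$ on $\partial B_1$. Suppose $\max_{\overline{B_1}}w=M>0$. If the maximum is attained at an interior point $\textbf{x}_0$, then $\nabla w(\textbf{x}_0)=0$ and $-\Delta w(\textbf{x}_0)\ge 0$ give $0\le L_\omega w(\textbf{x}_0)\le-\mu u_1(\textbf{x}_0)M\le 0$, forcing $u_1(\textbf{x}_0)=0$, i.e. $w(\textbf{x}_0)=-1<0$, a contradiction. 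If the maximum is attained only on $\partial B_1$, the Hopf boundary point lemma (applicable because $M\ge 0$ and the zeroth order coefficient is $\ge 0$) gives $\partial_n w(\textbf{x}_0)>0$, contradicting the Neumann condition. Hence $M\le 0$, i.e. $u_1\le 1$, and by the cyclic symmetry of the system $u_2,u_3\le 1$ as well.

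\emph{Strict inequalities and positivity.} If $u_1(\textbf{x}_0)=1$ for some $\textbf{x}_0$, then $w$ attains its maximum value $0$; the strong maximum principle (subsolution, nonnegative zeroth order coefficient, nonnegative maximum), combined with Hopf at the boundary as above, forces $w\equiv 0$, i.e. $u_1\equiv 1$, and substituting into the equation yields $0=-\beta\alpha u_2-\beta\gamma u_3$, hence $u_2\equiv u_3\equiv 0$, contradicting positivity. Thus $u_1<1$ in $\overline{B_1}$, and likewise for $u_2,u_3$. For the lower bound write $L_\omega u_1+c_1 u_1=0$ with $c_1:=-\mu+\mu u_1+\beta\alpha u_2+\beta\gamma u_3$, which is bounded thanks to $0\le u_i\le 1$; choosing $K\ge\|c_1\|_\infty$ gives $L_\omega u_1+(c_1+K)u_1=Ku_1\ge 0$ with $c_1+K\ge 0$, so the strong maximum principle and the Hopf lemma yield $u_1>0$ in $\overline{B_1}$ (an interior or boundary zero of $u_1$ would force $u_1\equiv 0$, excluded), and the same applies to $u_2,u_3$.

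The only genuinely delicate point is invoking the maximum-principle toolkit with the correct hypotheses: the Hopf lemma and the strong maximum principle for operators with a \emph{nonnegative} zeroth order term require the relevant extremum to be nonnegative — which is exactly our situation, with $M\ge 0$ for the maximum of $w$ and $0$ for the minimum of $u_i$ — and one needs $u_i\in C^1(\overline{B_1})$ for the Neumann derivative to be read off pointwise; both are standard once elliptic regularity up to the boundary is used. The rest is routine.
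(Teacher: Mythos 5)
Your proof is correct and follows essentially the same route as the paper: substitute $w=u_i-1$, observe that $w$ satisfies a differential inequality with a nonnegative zeroth-order coefficient, and conclude via the maximum principle together with the Neumann condition. The paper streamlines this by using the identity $-\mu u_1w+\mu w=-\mu w^2\le 0$ to get $(L_{\omega}+\mu)w_i<0$ with a \emph{constant} positive coefficient and then citing Amann's maximum principle for the Neumann problem directly, and it takes $u_i>0$ as part of the definition of a positive solution, so your (correct) extra work recovering strict positivity and the strict upper bound from the weaker hypothesis $u_i\ge 0$, $u_i\not\equiv 0$ is not needed there.
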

\begin{proof}
	Take  $w_i=u_{i}-1$, then 
	\begin{equation*}
		\begin{cases}
			(L_{\omega}+\mu)w_i <0,&\text{in}\ B_1,\\
			\frac{\partial w_i}{\partial n}=0,&\text{on} \ \partial B_1.
		\end{cases}
	\end{equation*}
	By the \cite[ Maximum Principle 4.1]{amann-1976},  we obtain $w_i<0$ in $\overline{B_1}$.
	Finally, we conclude that $0<u_i<1$  in $\overline{B_1}$.
\end{proof}
	
\medskip
\subsection{The spectrum of $L_{\omega}$ }
For the linear operator $L_{\omega}$
with the Neumann boundary condition, we analyze the spectrum set. 

\begin{lemma}(\cite[Appendix A, p29]{guo2023-JMPA}, \cite{W-1996})\label{polar-represent}
	For function $\psi(x)$ defined on a ball in $\mathbb{R}^{2}$, consider the polar coordinate $(r, \theta)$ in $\mathbb{R}^{2}$, where $\theta\in[0,2\pi]$. Then
	\begin{equation*}
		\nabla \psi=\frac{\textbf{x}}{r}\psi_{r}+\frac{\textbf{x}^{\bot}}{r^{2}}\psi_{\theta},\qquad
		\textbf{x}^{\bot}\cdot \nabla \psi=\frac{\partial \psi(r,\theta)}{\partial \theta},
	\end{equation*} 
	where $\textbf{x}^{\bot}:=(-x_{2},x_{1})$ with $\textbf{x}=(x_{1},x_{2})\in \mathbb{R}^{2}$.
\end{lemma}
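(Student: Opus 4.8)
\emph{Proof sketch (proposal).} This is a routine change of variables, and the plan is to verify both identities by a direct application of the chain rule on the punctured ball $\{0<r<1\}$, where polar coordinates are smooth. First I would record the partial derivatives of the polar variables: from $r=(x_1^2+x_2^2)^{1/2}$ together with $x_1=r\cos\theta$, $x_2=r\sin\theta$ one has
\begin{equation*}
\partial_{x_1}r=\frac{x_1}{r}=\cos\theta,\quad \partial_{x_2}r=\frac{x_2}{r}=\sin\theta,\quad \partial_{x_1}\theta=-\frac{x_2}{r^2}=-\frac{\sin\theta}{r},\quad \partial_{x_2}\theta=\frac{x_1}{r^2}=\frac{\cos\theta}{r}.
\end{equation*}

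Next, writing $\psi=\psi(r(\textbf{x}),\theta(\textbf{x}))$ and applying the chain rule componentwise gives
\begin{equation*}
\partial_{x_1}\psi=\psi_r\cos\theta-\frac{\sin\theta}{r}\psi_\theta,\qquad \partial_{x_2}\psi=\psi_r\sin\theta+\frac{\cos\theta}{r}\psi_\theta.
\end{equation*}
Grouping the two components into $\nabla\psi=(\partial_{x_1}\psi,\partial_{x_2}\psi)$ and observing that $(\cos\theta,\sin\theta)=\textbf{x}/r$ while $(-\sin\theta,\cos\theta)=\textbf{x}^{\bot}/r$ (with $\textbf{x}^{\bot}=(-x_2,x_1)$) yields precisely $\nabla\psi=\frac{\textbf{x}}{r}\psi_r+\frac{\textbf{x}^{\bot}}{r^2}\psi_\theta$, the first claimed identity.

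For the second identity I would simply take the Euclidean dot product of the formula just obtained with $\textbf{x}^{\bot}$, using the two elementary facts $\textbf{x}^{\bot}\cdot\textbf{x}=-x_2x_1+x_1x_2=0$ and $\textbf{x}^{\bot}\cdot\textbf{x}^{\bot}=x_1^2+x_2^2=r^2$: the radial term drops out and the angular term collapses, leaving $\textbf{x}^{\bot}\cdot\nabla\psi=\psi_\theta=\partial\psi(r,\theta)/\partial\theta$. The only subtlety — and it is a minor one, not a real obstacle — is the degeneracy of polar coordinates at the origin; since the lemma enters the paper only through the operator $L_\omega$ and the quantity $\textbf{x}^{\bot}\cdot\nabla\psi$ vanishes at $\textbf{x}=0$ for any function differentiable there, the identity extends by continuity to all of $B_1$. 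As the statement is anyway quoted from \cite{guo2023-JMPA, W-1996}, this computation is all that is needed.
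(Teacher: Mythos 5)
Your computation is correct and is exactly the standard chain-rule verification; the paper itself gives no proof of this lemma, simply citing \cite{guo2023-JMPA} and \cite{W-1996}, so your direct argument supplies precisely what those references would. Your remark about the degeneracy at the origin is handled appropriately, since $\textbf{x}^{\bot}\cdot\nabla\psi$ indeed extends by continuity to $\textbf{x}=0$.
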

\begin{proof}[Proof of Proposition \ref{spectrum-countable}]
	On the one hand, $u=constant$ is an eigenfunction with respect to eigenvalue $\lambda=0$ for the Neumann boundary condition.
	On the other hand, 
	By Lemma \ref{polar-represent}, in polar coordinates, $$	L_{\omega}u=-u_{rr}-\frac{1}{r}u_r-\frac{1}{r^2}u_{\theta\theta}+\omega u_{\theta}.$$
	If we try to find eigenfunctions of the form $f(r)\mathrm{e}^{\mathrm{i}k\theta}$, where $k\in\mathbb{Z}$. Then 
	\begin{equation}\label{bessel}
		\begin{aligned}
			f''(r)+\frac{1}{r}f'(r)+(\lambda-\mathrm{i}\omega k -\frac{k^2}{r^2})f(r)&=0, 
		\end{aligned}
	\end{equation}
	Recall that the Bessel function is used as the eigenfunctions in \cite[Proposition 1.2.14]{eigenvalue2006} or   \cite[p302]{courant-book}, both the Dirichlet boundary and Neumann boundary included. 	If $\lambda_{n,k}=\lambda_{n}+\mathrm{i}\omega k$, then the solution  $f_{n,k}(r)$  of equation \eqref{bessel}  exists,  and $f_{n,k}(r)\mathrm{e}^{\mathrm{i}k\theta}$ is the eigenfunction corresponding to $\lambda_{n,k}=\lambda_{n}+\mathrm{i}\omega k$.

	Take $\lambda\neq \lambda_{n}+\mathrm{i}\omega k$, We want to prove that for  $\forall f\in L^{2}(B_{1})$, the equation 
	\begin{equation*}
		(L_{\omega}-\lambda)u=f
	\end{equation*}
	admits a unique solution. Indeed,
	by the Fourier series expansion ( see \cite[p135]{stein}), 
	\begin{equation*}
		f(r\mathrm{e}^{\mathrm{i}\theta})=\sum_{k=-\infty}^{\infty}f_k(r)\mathrm{e}^{\mathrm{i}k\theta}.
	\end{equation*}
	We try to find $u_k(r)\mathrm{e}^{\mathrm{i}k\theta}$ such that
	\begin{equation*}
		(L_{\omega}-\lambda)u_k(r)\mathrm{e}^{\mathrm{i}k\theta}=f_k(r)\mathrm{e}^{\mathrm{i}k\theta}.
	\end{equation*}
	That is,
	\begin{equation*}
		-u''_k(r)-\frac{1}{r}u'_k(r)+\big(\frac{k^2}{r^2}+\mathrm{i}\omega k-\lambda\big)u_k(r)=f_k(r),
	\end{equation*}
	which is equivalent to 
	\begin{equation*}
		(-\Delta +\mathrm{i}\omega k-\lambda)\left(u_k(r)\mathrm{e}^{\mathrm{i}k\theta}\right)=f_k(r)\mathrm{e}^{\mathrm{i}k\theta}.
	\end{equation*}
	Then $u_k$ exists and $\|u_k\|_{L^2}\le C\|f_k\|_{L^2}$ since $\lambda-\mathrm{i}\omega k\notin \sigma(-\Delta)$. The question is to find a constant $C=C(\text{Re}\lambda)$ independent of $k$, which is true by \cite[Theorem 5.8]{resolvent-book}.
\end{proof}

From the arguments above, we could also obtain the spectrum of $L_{\omega}$ with the Dirichlet boundary condition.
	\begin{corollary}
	For the operator $L_{\omega}$ with Dirichlet boundary condition,
	the spectrum $\sigma(L_{\omega})$ is a countable set.  More precisely, 
	\begin{equation*}
		\sigma(L_{\omega})= \sigma_{p}(L_{\omega})=\{ \lambda_{n}+i\omega k: k\in \mathbb{Z}, \lambda_{n} \  \mbox{is\ the\ eigenvalue\ of\ } -\Delta_D \}.
	\end{equation*}
\end{corollary}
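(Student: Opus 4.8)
The plan is to rerun the separation-of-variables argument used for Proposition~\ref{spectrum-countable} almost verbatim, the only structural change being that the boundary condition $f(1)=0$ replaces $f'(1)=0$, so that the radial ODE becomes the Bessel eigenvalue problem associated to $-\Delta_D$ rather than $-\Delta_N$, and in particular the eigenvalue $0$ (with constant eigenfunction) disappears. First I would seek eigenfunctions of the form $f(r)\mathrm{e}^{\mathrm{i}k\theta}$, $k\in\mathbb{Z}$; by Lemma~\ref{polar-represent} the equation $L_{\omega}u=\lambda u$ reduces to
\[
f''(r)+\frac{1}{r}f'(r)+\Big(\lambda-\mathrm{i}\omega k-\frac{k^2}{r^2}\Big)f(r)=0,\qquad f(1)=0,
\]
with $f$ bounded (hence smooth) at $r=0$. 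This is exactly the Bessel eigenvalue problem for $-\Delta_D$ with spectral parameter $\lambda-\mathrm{i}\omega k$, so a nontrivial solution $f_{n,k}$ exists precisely when $\lambda-\mathrm{i}\omega k=\lambda_n$ for some Dirichlet eigenvalue $\lambda_n$ of $-\Delta$. Hence every $\lambda_n+\mathrm{i}\omega k$ is an eigenvalue of $L_{\omega}$, with eigenfunction $f_{n,k}(r)\mathrm{e}^{\mathrm{i}k\theta}$, and—contrary to the Neumann case—$0\notin\sigma(L_{\omega})$ since constants violate the Dirichlet condition.

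Next I would establish the reverse inclusion: any $\lambda$ outside $\{\lambda_n+\mathrm{i}\omega k\}$ belongs to the resolvent set. Given $f\in L^2(B_1)$, expand $f(r\mathrm{e}^{\mathrm{i}\theta})=\sum_{k\in\mathbb{Z}}f_k(r)\mathrm{e}^{\mathrm{i}k\theta}$ and solve mode by mode: $(L_{\omega}-\lambda)\big(u_k(r)\mathrm{e}^{\mathrm{i}k\theta}\big)=f_k(r)\mathrm{e}^{\mathrm{i}k\theta}$ is equivalent to $(-\Delta+\mathrm{i}\omega k-\lambda)\big(u_k\mathrm{e}^{\mathrm{i}k\theta}\big)=f_k\mathrm{e}^{\mathrm{i}k\theta}$, which is uniquely solvable in the Dirichlet domain because $\lambda-\mathrm{i}\omega k\notin\sigma(-\Delta_D)$. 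Summing the solved modes yields $u$ with $(L_{\omega}-\lambda)u=f$, and boundedness of $(L_{\omega}-\lambda)^{-1}$ on $L^2$ follows once the mode-wise estimate $\|u_k\|_{L^2}\le C\|f_k\|_{L^2}$ is obtained with $C$ independent of $k$. Since every spectral point is thus either an eigenvalue or in the resolvent set, $\sigma_c(L_{\omega})=\sigma_r(L_{\omega})=\varnothing$, i.e. $\sigma(L_{\omega})=\sigma_p(L_{\omega})$, and countability of $\{\lambda_n+\mathrm{i}\omega k:\ n\ge1,\ k\in\mathbb{Z}\}$ is immediate.

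As in the proof of Proposition~\ref{spectrum-countable}, the one genuinely delicate point is the \emph{uniformity in $k$} of the resolvent bound $\|(-\Delta_D+\mathrm{i}\omega k-\lambda)^{-1}\|\le C$. Writing $z=\lambda-\mathrm{i}\omega k$, one needs $\operatorname{dist}(z,\sigma(-\Delta_D))$ bounded below as $|k|\to\infty$; but $\sigma(-\Delta_D)\subset\mathbb{R}$ while $\operatorname{Im}z=\operatorname{Im}\lambda-\omega k\to\mp\infty$, so $\operatorname{dist}(z,\sigma(-\Delta_D))\ge|\operatorname{Im}\lambda-\omega k|\to\infty$, and for the finitely many remaining $k$ one uses that $\lambda\ne\lambda_n+\mathrm{i}\omega k$ to get a positive lower bound. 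The quantitative constant $C=C(\operatorname{Re}\lambda)$ independent of $k$ is then supplied by \cite[Theorem~5.8]{resolvent-book}, exactly as in the Neumann argument; no new estimate is needed, only the substitution of the Dirichlet spectrum for the Neumann one throughout.
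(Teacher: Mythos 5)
Your proposal is correct and follows essentially the same route as the paper: the paper proves the corollary simply by remarking that the separation-of-variables and mode-by-mode resolvent argument of Proposition \ref{spectrum-countable} carries over verbatim with the Neumann condition $f'(1)=0$ replaced by the Dirichlet condition $f(1)=0$ (and the constant eigenfunction, hence the eigenvalue $0$, dropped), which is exactly what you carry out. Your added explanation of why the resolvent bound is uniform in $k$ only elaborates on the citation the paper already uses for the Neumann case.
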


\medskip
\subsection{Proof of Theorem \ref{Neumann}(ii)}
Our objective is to apply the multi-parameter bifurcation theorem, as outlined in \cite[Theorem I.19.6]{bifurcation-2012} or \cite[Theorem 1.3.4]{chang2005}, with fixed values for the parameters $\omega$, $\alpha$, and $\gamma$. 
Throughout this subsection, we denote
\begin{equation*}
	E:=\{(u_1, u_2, u_3): u_{i}\in C(\overline{B_{1}},\mathbb{R}), u_2(\textbf{x})=u_1(R_{\frac{2\pi}{3}}\textbf{x}), u_3(\textbf{x})=u_2(R_{\frac{2\pi}{3}}\textbf{x}) )\}.
\end{equation*}
To locate non-constant solutions, consider the mapping $F: \big((C^{2,\alpha}(\overline{B_1},\mathbb{R}))^3\cap E\big)\times \mathbb{R}^2\rightarrow (C^{0,\alpha}(\overline{B_1},\mathbb{R}))^3\cap E$:
\begin{equation*}
	F\big((u_1,u_2,u_3),(\beta,\mu)\big)=\begin{pmatrix}
		L_{\omega}(u_1)-\mu u_1(1-u_1)+\beta \alpha u_1 u_2+\beta \gamma u_1 u_3,\\
		L_{\omega}(u_2)-\mu u_2(1-u_2)+\beta \gamma u_1 u_2+\beta \alpha u_2 u_3\\
		L_{\omega}(u_3)-\mu u_3(1-u_3)+\beta \alpha u_1 u_3+\beta \gamma u_2 u_3
	\end{pmatrix}.
\end{equation*}
Obviously, 
\begin{equation*}
	F((1,1,1),(0,\mu))=(0,0,0),\quad F\big( t_{\beta,\mu}\big(1,1,1\big),(\beta,\mu)\big)=(0,0,0),\ \text{where}\ t_{\beta,\mu}=\frac{\mu}{\mu+\beta(\alpha+\gamma)}.
\end{equation*}
To find a non-constant positive solution, we define the mapping $\tilde{F}:  \big((C^{2,\alpha}(\overline{B_1},\mathbb{R}))^3\cap E\big)\times \mathbb{R}^2\rightarrow (C^{0,\alpha}(\overline{B_1},\mathbb{R}))^3\cap E$: 
\begin{equation*}
	\begin{aligned}
		\tilde{F}((u_1,u_2,u_3),(\beta,\mu))&:={F}\big((u_1+t_{\beta,\mu},u_2+t_{\beta,\mu},u_3+t_{\beta,\mu}),(\beta,\mu)\big)
	=\begin{pmatrix}
			L_{\omega}(u_1)+(u_1+t_{\beta,\mu})(\mu u_1+\beta \alpha u_2+\beta \gamma u_3)\\
			L_{\omega}(u_2)+(u_2+t_{\beta,\mu})(\mu u_2+\beta \gamma u_1+\beta \alpha u_3)\\
			L_{\omega}(u_3)+(u_3+t_{\beta,\mu})(\mu u_3+\beta \alpha u_1+\beta \gamma u_2)
		\end{pmatrix}
		\\
		&=:L(\beta,\mu)\big(u_{1},u_{2},u_{3}\big)+P\big((u_{1},u_{2},u_{3}),(\beta,\mu)\big),
	\end{aligned}
\end{equation*}
where  
\begin{equation*}
	L(\beta,\mu)\big(u_{1},u_{2},u_{3}\big)=\begin{pmatrix}
		L_{\omega}(u_1)+t_{\beta,\mu}(\mu u_1+\beta \alpha u_2+\beta \gamma u_3)\\
		L_{\omega}(u_2)+t_{\beta,\mu}(\mu u_2+\beta \gamma u_1+\beta \alpha u_3)\\
		L_{\omega}(u_3)+t_{\beta,\mu}(\mu u_3+\beta \alpha u_1+\beta \gamma u_2)
	\end{pmatrix}, P\big((u_{1},u_{2},u_{3}),(\beta,\mu)\big)
	=\begin{pmatrix}
		u_1(\mu u_1+\beta \alpha u_2+\beta \gamma u_3)\\
		u_2(\mu u_2+\beta \gamma u_1+\beta \alpha u_3)\\
		u_3(\mu u_3+\beta \alpha u_1+\beta \gamma u_2)
	\end{pmatrix}.
\end{equation*}
Then 
$ \tilde{F}\big((0,0,0),(\beta,\mu)\big)=0$, $P\big((0,0,0),(\beta,\mu)\big)=0$ for all $\beta, \mu$. Furthermore, 
\begin{equation*}
	\begin{aligned}
		P_x\big((u_1,u_2,u_3),(\beta,\mu)\big)(h_1,h_2,h_3)
		&
		=\begin{pmatrix}
			u_1(\mu h_1+\beta \alpha h_2+\beta \gamma h_3)+h_1(\mu u_1+\beta \alpha u_2+\beta \gamma u_3)\\
			u_2(\mu h_2+\beta \gamma h_1+\beta \alpha h_3)+h_2(\mu u_2+\beta \gamma u_1+\beta \alpha u_3)\\
			u_3(\mu h_3+\beta \alpha h_1+\beta \gamma h_2)+h_3(\mu u_3+\beta \alpha u_1+\beta \gamma u_2)
		\end{pmatrix},
	\end{aligned}
\end{equation*}
\begin{equation*}
	\begin{aligned}
		P_{x\beta}\big((u_1,u_2,u_3),(\beta,\mu)\big)(h_1,h_2,h_3)
		&
		=\begin{pmatrix}
			u_1(\alpha h_2+ \gamma h_3)+h_1( \alpha u_2+\gamma u_3)\\
			u_2( \gamma h_1+\alpha h_3)+h_2( \gamma u_1+\alpha u_3)\\
			u_3( \alpha h_1+ \gamma h_2)+h_3( \alpha u_1+ \gamma u_2)
		\end{pmatrix},
	\end{aligned}
\end{equation*}
and
\begin{equation*}
	\begin{aligned}
		P_{x\mu}\big((u_1,u_2,u_3),(\beta,\mu)\big)(h_1,h_2,h_3)
		&
		=\begin{pmatrix}
			2u_1h_1\\
			2u_2h_2\\
			2u_3h_3
		\end{pmatrix}.
	\end{aligned}
\end{equation*}
Thus $P_x\big((0,0,0),(\beta,\mu)\big)=0$,
	$P_{x\beta}\big((0,0,0),(\beta,\mu)\big)=0$, and  $P_{x\mu}\big((0,0,0),(\beta,\mu)\big)=0$ for all $\beta, \mu$.

\textbf{Step 1:} We try to find the $\ker L(\beta,\mu)$ and $\text{Ran} L(\beta,\mu)$. Recall that
\begin{equation*}
	\begin{aligned}
		L(\beta,\mu)\big(h_{1},h_{2},h_{3}\big)	&
		= \begin{pmatrix}
			L_{\omega}(h_1)\\
			L_{\omega}(h_2)
			\\	L_{\omega}(h_3)
		\end{pmatrix}+	A_{\beta,\mu}
		\begin{pmatrix}
			h_1\\
			h_2\\
			h_3
		\end{pmatrix}, \text{where} \  A_{\beta,\mu}:=t_{\beta,\mu} \begin{pmatrix}
			\mu&\beta\alpha&\beta\gamma\\
			\beta\gamma&\mu&\beta\alpha\\
			\beta\alpha&\beta\gamma&\mu
		\end{pmatrix},
	\end{aligned}
\end{equation*}
For $A_{\beta,\mu}$, there are three eigenvalues:
\begin{equation*}
	\mu,\  \delta_{\beta,\mu}=\frac{2\mu-\beta(\alpha+\gamma)}{2}t_{\beta,\mu} +\mathrm{i}\frac{\sqrt{3}}{2}\beta(\alpha-\gamma)t_{\beta,\mu}, 
	\  \bar{\delta}_{\beta,\mu}=\frac{2\mu-\beta(\alpha+\gamma)}{2}t_{\beta,\mu} -\mathrm{i}\frac{\sqrt{3}}{2}\beta(\alpha-\gamma)t_{\beta,\mu}.
\end{equation*}
 Moreover,  by Proposition \ref{S}, one has
\begin{equation*}
	A_{\beta,\mu}S=S\begin{pmatrix}
		\mu&0&0\\
		0&\delta_{\beta,\mu}&0\\
		0&0&\bar{\delta}_{\beta,\mu}
	\end{pmatrix}, \quad S=	\begin{pmatrix}
		1&-\frac{1}{2}+\frac{\sqrt{3}}{2}\mathrm{i}&-\frac{1}{2}-\frac{\sqrt{3}}{2}\mathrm{i}\\
		1&-\frac{1}{2}-\frac{\sqrt{3}}{2}\mathrm{i}&-\frac{1}{2}+\frac{\sqrt{3}}{2}\mathrm{i}\\
		1&1&1
	\end{pmatrix}.
\end{equation*}
Consequently,
\begin{equation*}
	\begin{cases}
		L_{\omega} v_1+\mu v_1=0,\\
		L_{\omega} v_2+\delta_{\beta,\mu} v_2=0,\\
		L_{\omega} v_3+\bar{\delta}_{\beta,\mu} v_3=0,
	\end{cases} \text{where}	\begin{pmatrix}
		v_1\\v_2\\v_3
	\end{pmatrix}=S^{-1}\begin{pmatrix}
		h_1\\h_2\\h_3
	\end{pmatrix}.
\end{equation*}
By Proposition \ref{spectrum-countable}, it follows that   $\frac{\beta(\alpha+\gamma)-2\mu}{2}t_{\beta,\mu}$ is a positive  eigenvalue of $-\Delta_N$, and $\frac{\sqrt{3}}{2}\beta(\alpha-\gamma)t_{\beta,\mu}=k\omega$ with $k\in \mathbb{Z}$, that is,
\begin{itemize}
	\item[(i)]  $\frac{\beta(\alpha+\gamma)-2\mu}{2}\frac{\mu}{\mu+\beta(\alpha+\gamma)}\in \{\lambda_{n}:  \lambda_{n} \text{ \ is\  a\  positive\  eigenvalue\  of\ } -\Delta_N, n=1,2,\cdots \}$;
	\item [(ii)] $\frac{\sqrt{3}}{2}\beta(\alpha-\gamma)\frac{\mu}{\mu+\beta(\alpha+\gamma)}\in \{k\omega: k\in\mathbb{Z}\}$. 
\end{itemize}
Hence for  $\beta_{0}, \mu_0>0$ satisfying   \eqref{beta-star} with $\lambda_{n}$ and $k$  chosen properly and fixed,  
\begin{equation*}
	\ker L(\beta_{0},\mu_0)=\{(h_{1},h_{2},h_{3})\},\ \text{where}\ 
	\begin{pmatrix}
		h_1\\h_2\\h_3
	\end{pmatrix}=S\begin{pmatrix}
		0\\ (a-\mathrm{i}bk) f_{n,k}(r)\mathrm{e}^{-\mathrm{i}k\theta}\\(c+\mathrm{i}dk)f_{n,k}(r)\mathrm{e}^{\mathrm{i}k\theta}
	\end{pmatrix},\ \text{where} \ a,b, c,d \in\mathbb{R}.
\end{equation*}
To guarantee $h_{i}$ are real functions, we need $a=c, b=d$. Thus 
\begin{small}
\begin{equation}\label{kernel-expression}
	\begin{aligned}
		\begin{pmatrix}
			h_1\\h_2\\h_3
		\end{pmatrix}&=S\begin{pmatrix}
			0\\ (a-\mathrm{i}bk) f_{n,k}(r)\mathrm{e}^{-\mathrm{i}k\theta}\\(a+\mathrm{i}bk)f_{n,k}(r)\mathrm{e}^{\mathrm{i}k\theta}
		\end{pmatrix}=af_{n,k}(r)\begin{pmatrix}
			-\cos k\theta+\sqrt{3}\sin k\theta\\
			-\cos k\theta-\sqrt{3}\sin k\theta\\
			2\cos k\theta
		\end{pmatrix}+bkf_{n,k}(r)\begin{pmatrix}
			\sin k\theta+\sqrt{3}\cos k\theta\\
			\sin k\theta-\sqrt{3}\cos k\theta\\
			-2\sin k\theta
		\end{pmatrix}, \ a,b \in\mathbb{R}.
	\end{aligned}
\end{equation}
\end{small}
In order that $(h_{1},h_{2},h_{3})\in E$, it is required  that \textcolor{black}{$\frac{2k+2}{3}\in\{2n: n\in\mathbb{Z}\}$.} 
In this case, 
$\dim \ker L(\beta_{0},\mu_0)= 2$. 
On the other hand,  note that
\begin{equation*}
	[\text{Ran} L(\beta_{0},\mu_0)]^{\bot}= \ker \big(L(\beta_{0},\mu_0)\big)^*.
\end{equation*}
In what follows, we try to find $ \ker \big(L(\beta_{0},\mu_0)\big)^*$. Notice easily  that
\begin{equation*}
	\begin{aligned}
		(L(\beta,\mu))^{*}(v_{1},v_{2},v_{3})
		= \begin{pmatrix}
			L_{-\omega}(v_1)\\
			L_{-\omega}(v_2)
			\\	L_{-\omega}(v_3)
		\end{pmatrix}+	M_{\beta,\mu}
		\begin{pmatrix}
			v_1\\
			v_2\\
			v_3
		\end{pmatrix}, \text{where} \  M_{\beta,\mu}:=t_{\beta,\mu} \begin{pmatrix}
			\mu&\beta\gamma&\beta\alpha\\
			\beta\alpha&\mu&\beta\gamma\\
			\beta\gamma&\beta\alpha&\mu
		\end{pmatrix}.
	\end{aligned}
\end{equation*}
Hence by Proposition \ref{S} again, we obtain
\begin{equation*}
	M_{\beta,\mu}S=S\begin{pmatrix}
		\mu&0&0\\
		0&\bar{\delta}_{\beta,\mu}&0\\
		0&0&{\delta}_{\beta,\mu}
	\end{pmatrix}, \quad S=	\begin{pmatrix}
		1&-\frac{1}{2}+\frac{\sqrt{3}}{2}\mathrm{i}&-\frac{1}{2}-\frac{\sqrt{3}}{2}\mathrm{i}\\
		1&-\frac{1}{2}-\frac{\sqrt{3}}{2}\mathrm{i}&-\frac{1}{2}+\frac{\sqrt{3}}{2}\mathrm{i}\\
		1&1&1
	\end{pmatrix}.
\end{equation*}
Then  for $(v_{1},v_{2},v_{3})\in \ker \big(L(\beta_{0},\mu_0)\big)^*$,
\begin{equation*}
	\begin{cases}
		L_{-\omega} w_1+\mu w_1=0,\\
		L_{-\omega} w_2+(-\lambda_{n}-\mathrm{i}\omega k) w_2=0,\\
		L_{-\omega} w_3+(-\lambda_{n}+\mathrm{i}\omega k) w_3=0,
	\end{cases} \text{where}	\begin{pmatrix}
		w_1\\w_2\\w_3
	\end{pmatrix}=S^{-1}\begin{pmatrix}
		v_1\\v_2\\v_3
	\end{pmatrix}.
\end{equation*}
Thus by Proposition \ref{spectrum-countable}(iii),
\begin{equation*}
		\ker \big(L(\beta_{0},\mu_0)\big)^*=\Big\{(v_{1},v_{2},v_{3}),\ \text{where}\ 
	\begin{aligned}
		\begin{pmatrix}
			v_1\\v_2\\v_3
		\end{pmatrix}=S\begin{pmatrix}
			0\\ (c-\mathrm{i}dk) f_{n,k}(r)\mathrm{e}^{-\mathrm{i}k\theta}\\(c+\mathrm{i}dk)f_{n,k}(r)\mathrm{e}^{\mathrm{i}k\theta}
		\end{pmatrix}, c,d\in\mathbb{R}
	\end{aligned}\Big\},
\end{equation*}
which means 
	\begin{equation}\label{ran-bot-eq-ker}
	[\text{Ran} L(\beta_{0},\mu_0)]^{\bot}=\ker \big(L(\beta_{0},\mu_0)\big)^*=\ker L(\beta_{0},\mu_0).
	\end{equation}

\textbf{Step 2:}
 We claim that the complement of $\text{Ran} L(\beta_{0},\mu_0)$ is spanned by the vectors $L_{\beta}(\beta_{0},\mu_0)\textbf{h}_0$ and $L_{\mu}(\beta_{0},\mu_0)\textbf{h}_0$, where $\textbf{h}_0\in \ker L(\beta_{0},\mu_0)$ and 
\begin{equation*}
	\begin{aligned}
		\textbf{h}_0=	\begin{pmatrix}
			h_1\\h_2\\h_3
		\end{pmatrix}&=S\begin{pmatrix}
			0\\  f_{n,k}(r)\mathrm{e}^{-\mathrm{i}k\theta}\\f_{n,k}(r)\mathrm{e}^{\mathrm{i}k\theta}
		\end{pmatrix}=f_{n,k}(r)\begin{pmatrix}
		-\cos k\theta+\sqrt{3}\sin k\theta\\
		-\cos k\theta-\sqrt{3}\sin k\theta\\
		2\cos k\theta
		\end{pmatrix}.
	\end{aligned}
\end{equation*} Since \eqref{ran-bot-eq-ker}, it suffices to prove that $\ker L(\beta_{0},\mu_0)$ is spanned by $L_{\beta}(\beta_{0},\mu_0)\textbf{h}_0$ and $L_{\mu}(\beta_{0},\mu_0)\textbf{h}_0$.

By direct calculations, one has
\begin{equation*}
	\begin{aligned}
		 L_{\beta}(\beta_{0},\mu_0)\textbf{h}_0
		= t_{\beta_0,\mu_0}^2\begin{pmatrix}
			-(\alpha+\gamma)& \alpha & \gamma \\
			\gamma&
			-(\alpha+\gamma) &\alpha\\
			\alpha&\gamma&	-(\alpha+\gamma)
		\end{pmatrix}\begin{pmatrix}
			h_{1}\\h_{2}\\h_{3}
		\end{pmatrix},
	\end{aligned}
\end{equation*}
and
\begin{equation*}
	\begin{aligned}
		 L_{\mu}(\beta_{0},\mu_0)\textbf{h}_0
		= \frac{1}{(\mu_0+\beta_0\alpha+\beta_0\gamma)^2}\begin{pmatrix}
			\mu_0^2+2\beta_0\mu_0(\alpha+\gamma)& \beta_0^2\alpha(\alpha+\gamma)&\beta_0^2\gamma(\alpha+\gamma)\\
			\beta_0^2\gamma(\alpha+\gamma)&		\mu_0^2+2\beta_0\mu_0(\alpha+\gamma)& \beta_0^2\alpha(\alpha+\gamma)\\
			\beta_0^2\alpha(\alpha+\gamma)&\beta_0^2\gamma(\alpha+\gamma)&		\mu_0^2+2\beta_0\mu_0(\alpha+\gamma)
		\end{pmatrix}\begin{pmatrix}
			h_{1}\\h_{2}\\h_{3}
		\end{pmatrix}.
	\end{aligned}
\end{equation*}
Therefore it is sufficient to prove that there exists $\sigma, \tau\in\mathbb{R}$ such that
$(v_1,v_2,v_3)=\sigma  L_{\beta}(\beta_{0},\mu_0)\textbf{h}_0+\tau   L_{\mu}(\beta_{0},\mu_0)\textbf{h}_0$ for any $(v_1, v_2,v_3)\in \ker L(\beta_{0},\mu_0)$. Recalling the expression in \eqref{kernel-expression}, if $(v_1, v_2,v_3)\in \ker L(\beta_{0},\mu_0)$, then there exists $c, d\in\mathbb{R}$ such that 
\begin{equation*}
	\begin{aligned}
		\begin{pmatrix}
			v_1\\v_2\\v_3
		\end{pmatrix}&=S\begin{pmatrix}
			0\\ (c-\mathrm{i}dk) f_{n,k}(r)\mathrm{e}^{-\mathrm{i}k\theta}\\(c+\mathrm{i}dk)f_{n,k}(r)\mathrm{e}^{\mathrm{i}k\theta}
		\end{pmatrix}.
	\end{aligned}
\end{equation*}
Consequently, we only need to prove 
\begin{small}
	\begin{equation*}
		\begin{aligned}
			&S\begin{pmatrix}
				0\\ (c-\mathrm{i}dk) f_{n,k}(r)\mathrm{e}^{-\mathrm{i}k\theta}\\(c+\mathrm{i}dk)f_{n,k}(r)\mathrm{e}^{\mathrm{i}k\theta}
			\end{pmatrix}
			=\sigma t_{\beta_0,\mu_0}^2\begin{pmatrix}
			-(\alpha+\gamma)& \alpha & \gamma \\
			\gamma&
			-(\alpha+\gamma) &\alpha\\
			\alpha&\gamma&	-(\alpha+\gamma)
			\end{pmatrix}S\begin{pmatrix}
			0\\  f_{n,k}(r)\mathrm{e}^{-\mathrm{i}k\theta}\\f_{n,k}(r)\mathrm{e}^{\mathrm{i}k\theta}
			\end{pmatrix}
		\\
			&\quad+\tau  \frac{1}{(\mu_0+\beta_0\alpha+\beta_0\gamma)^2}\begin{pmatrix}
			\mu_0^2+2\beta_0\mu_0(\alpha+\gamma)& \beta_0^2\alpha(\alpha+\gamma)&\beta_0^2\gamma(\alpha+\gamma)\\
			\beta_0^2\gamma(\alpha+\gamma)&		\mu_0^2+2\beta_0\mu_0(\alpha+\gamma)& \beta_0^2\alpha(\alpha+\gamma)\\
			\beta_0^2\alpha(\alpha+\gamma)&\beta_0^2\gamma(\alpha+\gamma)&		\mu_0^2+2\beta_0\mu_0(\alpha+\gamma)
			\end{pmatrix}S\begin{pmatrix}
			0\\  f_{n,k}(r)\mathrm{e}^{-\mathrm{i}k\theta}\\f_{n,k}(r)\mathrm{e}^{\mathrm{i}k\theta}
			\end{pmatrix}.
		\end{aligned}
	\end{equation*}
\end{small}
Equivalently, by Proposition \ref{S}, it suffices to prove
\begin{small}
	\begin{equation*}
		\begin{aligned}
		&	\begin{pmatrix}
				0\\ (c-\mathrm{i}dk) f_{n,k}(r)\mathrm{e}^{-\mathrm{i}k\theta}\\(c+\mathrm{i}dk)f_{n,k}(r)\mathrm{e}^{\mathrm{i}k\theta}
			\end{pmatrix}
			=\sigma t_{\beta_0,\mu_0}^2S^{-1}\begin{pmatrix}
				-(\alpha+\gamma)& \alpha & \gamma \\
				\gamma&
				-(\alpha+\gamma) &\alpha\\
				\alpha&\gamma&	-(\alpha+\gamma)
			\end{pmatrix}S\begin{pmatrix}
				0\\  f_{n,k}(r)\mathrm{e}^{-\mathrm{i}k\theta}\\f_{n,k}(r)\mathrm{e}^{\mathrm{i}k\theta}
			\end{pmatrix}
			\\
			&\quad+\tau  \frac{1}{(\mu_0+\beta_0\alpha+\beta_0\gamma)^2}S^{-1}\begin{pmatrix}
				\mu_0^2+2\beta_0\mu_0(\alpha+\gamma)& \beta_0^2\alpha(\alpha+\gamma)&\beta_0^2\gamma(\alpha+\gamma)\\
				\beta_0^2\gamma(\alpha+\gamma)&		\mu_0^2+2\beta_0\mu_0(\alpha+\gamma)& \beta_0^2\alpha(\alpha+\gamma)\\
				\beta_0^2\alpha(\alpha+\gamma)&\beta_0^2\gamma(\alpha+\gamma)&		\mu_0^2+2\beta_0\mu_0(\alpha+\gamma)
			\end{pmatrix}S\begin{pmatrix}
				0\\  f_{n,k}(r)\mathrm{e}^{-\mathrm{i}k\theta}\\f_{n,k}(r)\mathrm{e}^{\mathrm{i}k\theta}
			\end{pmatrix}\\
			&=\sigma \begin{pmatrix}
				0&0 &0 \\
				0&
				p_{\beta_0,\mu_0} &0\\
				0&0&	\bar{p}_{\beta_0,\mu_0}
			\end{pmatrix}\begin{pmatrix}
				0\\  f_{n,k}(r)\mathrm{e}^{-\mathrm{i}k\theta}\\f_{n,k}(r)\mathrm{e}^{\mathrm{i}k\theta}
			\end{pmatrix}
			+\tau  \begin{pmatrix}
				1& 0&0\\
				0&		m_{\beta_0,\mu_0}& 0\\
				0&0&		\bar{m}_{\beta_0,\mu_0}
			\end{pmatrix}\begin{pmatrix}
				0\\  f_{n,k}(r)\mathrm{e}^{-\mathrm{i}k\theta}\\f_{n,k}(r)\mathrm{e}^{\mathrm{i}k\theta}
			\end{pmatrix}\\
			&=\begin{pmatrix}
				0\\ (\sigma p_{\beta_0,\mu_0}+\tau m_{\beta_0,\mu_0}) f_{n,k}(r)\mathrm{e}^{-\mathrm{i}k\theta}
				\\(\sigma \bar{p}_{\beta_0,\mu_0}+\tau \bar{m}_{\beta_0,\mu_0})f_{n,k}(r)\mathrm{e}^{\mathrm{i}k\theta}
			\end{pmatrix},
		\end{aligned}
	\end{equation*}
\end{small}
where \begin{equation*}
	\begin{aligned}
		p_{\beta_0,\mu_0}&=	\frac{1}{2(\mu_0+\beta_0\alpha+\beta_0\gamma)^2}	[-3\mu_0^2(\alpha+\gamma)+\mathrm{i}\sqrt{3}\mu_0^2(\alpha-\gamma)],\\
		m_{\beta_0,\mu_0}&=\frac{1}{2(\mu_0+\beta_0\alpha+\beta_0\gamma)^2}[2\mu_0^2+4\beta_0\mu_0(\alpha+\gamma)-\beta_0^2(\alpha+\gamma)^2+\mathrm{i}\sqrt{3}\beta_0^2(\alpha^2-\gamma^2)].
	\end{aligned}
\end{equation*} Thus
one only needs to prove that $ p_{\beta_0,\mu_0}$ and  $m_{\beta_0,\mu_0}$ are linearly independent. 
Actually, the determinant 
\begin{equation*}
	\begin{aligned}
		\begin{vmatrix}
			-3\mu_0^2(\alpha+\gamma)&\sqrt{3}\mu_0^2(\alpha-\gamma)\\
			2\mu_0^2+4\beta_0\mu_0(\alpha+\gamma)-\beta_0^2(\alpha+\gamma)^2&\sqrt{3}\beta_0^2(\alpha^2-\gamma^2)
		\end{vmatrix}			=-2\sqrt{3}\mu_0^2(\alpha-\gamma)(\mu_0+\beta_0\gamma+\beta_0\alpha)^2\neq 0.
	\end{aligned}
\end{equation*}
Hence the claim holds.

\textbf{Step 3:} By applying \cite[Theorem I.19.6]{bifurcation-2012},  the equation $\tilde{F}((u_1,u_2,u_3),(\beta,\mu))=\textbf{0}$ possesses a continuously differentiable solution curve $\big\{\big(\textbf{u}(s),(\beta(s),\mu(s))\big) | s\in(-\delta,\delta)\big\}\subset \big((C^{2,\alpha}(\overline{B_1},\mathbb{R}))^3\cap E\big)\times \mathbb{R}^2$ through $\big(\textbf{u}(0),(\beta(0),\mu(0))\big)=((0,0,0),(\beta_{0},\mu_0))$, $\textbf{u}(s)=s\textbf{h}_0+\psi\big(s\textbf{h}_0, (\beta(s),\mu(s)\big)$, and $\textbf{u}'(0)=\textbf{h}_0$.

\begin{remark}
By \cite[Remark I.19.7]{bifurcation-2012}, in fact, we could obtain a $2$-dimensional solution set 
	$$\Big\{\Big(s\textbf{h}+\psi\big(s\textbf{h},(\beta,\mu)(s,\textbf{h}))\big), (\beta,\mu)(s,\textbf{h})\Big)\Big\}$$
	 for $\tilde{F}(\textbf{u},(\beta,\mu))=0$ that depends in a continuously differentiable way on $(s,\textbf{h})$.
\end{remark}
\medskip
\subsection{Proof of Theorem \ref{Neumann}(iii)}
We aim to utilize the multi-parameter bifurcation theorem, as described in \cite[Theorem I.19.6]{bifurcation-2012} or \cite[Theorem 1.3.4]{chang2005}  for fixed $\mu,\alpha,\gamma$.

Consider the mapping $G: \big((C^{2,\alpha}(\overline{B_1}))^3\cap E\big)\times \mathbb{R}^2\rightarrow (C^{0,\alpha}(\overline{B_1}))^3\cap E$:
\begin{equation*}
	G\big((u_1,u_2,u_3),(\beta,\omega)\big)=\begin{pmatrix}
		-\Delta u_1+\omega \textbf{x}^{\bot}\cdot\nabla u_1-\mu u_1(1-u_1)+\beta \alpha u_1 u_2+\beta \gamma u_1 u_3,\\
		-\Delta u_2+\omega \textbf{x}^{\bot}\cdot\nabla u_2-\mu u_2(1-u_2)+\beta \gamma u_1 u_2+\beta \alpha u_2 u_3\\
		-\Delta u_3+\omega \textbf{x}^{\bot}\cdot\nabla u_3-\mu u_3(1-u_3)+\beta \alpha u_1 u_3+\beta \gamma u_2 u_3
	\end{pmatrix}.
\end{equation*}
Obviously, 
\begin{equation*}
	G\big((1,1,1),(0,\omega)\big)=(0,0,0),\quad G\Big( t_{\beta}\big(1,1,1\big),(\beta,\omega)\Big)=(0,0,0),\ \text{where}\ t_{\beta}=\frac{\mu}{\mu+\beta(\alpha+\gamma)}.
\end{equation*}
Furthermore,
consider the mapping $\tilde{G}:  \big((C^{2,\alpha}(\overline{B_1}))^3\cap E\big)\times \mathbb{R}^2\rightarrow (C^{0,\alpha}(\overline{B_1}))^3\cap E$: 
\begin{equation*}
	\begin{aligned}
		\tilde{G}\big((u_1,u_2,u_3),(\beta,\omega)\big)&:={F}((u_1+t_{\beta},u_2+t_{\beta},u_3+t_{\beta}),(\beta,\omega))\\
		&=\begin{pmatrix}
			-\Delta u_1+\omega \textbf{x}^{\bot}\cdot\nabla u_1+(u_1+t_{\beta})(\mu u_1+\beta \alpha u_2+\beta \gamma u_3)\\
			-\Delta u_2+\omega \textbf{x}^{\bot}\cdot\nabla u_2+(u_2+t_{\beta})(\mu u_2+\beta \gamma u_1+\beta \alpha u_3)\\
			-\Delta u_3+\omega \textbf{x}^{\bot}\cdot\nabla u_3+(u_3+t_{\beta})(\mu u_3+\beta \alpha u_1+\beta \gamma u_2)
		\end{pmatrix}
		\\
		&=:L(\beta,\omega)\big(u_{1},u_{2},u_{3}\big)+P\big((u_{1},u_{2},u_{3}),(\beta,\omega)\big)
	\end{aligned}
\end{equation*}
where  
\begin{equation*}
	\begin{aligned}
		L(\beta,\omega)\big(u_{1},u_{2},u_{3}\big)&=\begin{pmatrix}
			-\Delta u_1+\omega \textbf{x}^{\bot}\cdot\nabla u_1+t_{\beta}(\mu u_1+\beta \alpha u_2+\beta \gamma u_3)\\
			-\Delta u_2+\omega \textbf{x}^{\bot}\cdot\nabla u_2+t_{\beta}(\mu u_2+\beta \gamma u_1+\beta \alpha u_3)\\
			-\Delta u_3+\omega \textbf{x}^{\bot}\cdot\nabla u_3+t_{\beta}(\mu u_3+\beta \alpha u_1+\beta \gamma u_2)
		\end{pmatrix},\\
		P\big((u_{1},u_{2},u_{3}),(\beta,\omega)\big)
		&=\begin{pmatrix}
			u_1(\mu u_1+\beta \alpha u_2+\beta \gamma u_3)\\
			u_2(\mu u_2+\beta \gamma u_1+\beta \alpha u_3)\\
			u_3(\mu u_3+\beta \alpha u_1+\beta \gamma u_2)
		\end{pmatrix}.
	\end{aligned}
\end{equation*}
Then 
$ \tilde{G}((0,0,0),(\beta,\omega))=0$, $P((0,0,0),(\beta,\omega))=0$ for all $\beta, \omega$, and  
\begin{equation*}
	\begin{aligned}
		P_x\big((u_1,u_2,u_3),(\beta,\omega)\big)(h_1,h_2,h_3)
		&
		=\begin{pmatrix}
			u_1(\mu h_1+\beta \alpha h_2+\beta \gamma h_3)+h_1(\mu u_1+\beta \alpha u_2+\beta \gamma u_3)\\
			u_2(\mu h_2+\beta \gamma h_1+\beta \alpha h_3)+h_2(\mu u_2+\beta \gamma u_1+\beta \alpha u_3)\\
			u_3(\mu h_3+\beta \alpha h_1+\beta \gamma h_2)+h_3(\mu u_3+\beta \alpha u_1+\beta \gamma u_2)
		\end{pmatrix},
	\end{aligned}
\end{equation*}
\begin{equation*}
	\begin{aligned}
		P_{x\beta}\big((u_1,u_2,u_3),(\beta,\omega)\big)(h_1,h_2,h_3)
		&
		=\begin{pmatrix}
			u_1(\alpha h_2+ \gamma h_3)+h_1( \alpha u_2+\gamma u_3)\\
			u_2( \gamma h_1+\alpha h_3)+h_2( \gamma u_1+\alpha u_3)\\
			u_3( \alpha h_1+ \gamma h_2)+h_3( \alpha u_1+ \gamma u_2)
		\end{pmatrix},
	\end{aligned}
\end{equation*}
and
\begin{equation*}
	\begin{aligned} 
		P_{x\omega}\big((u_1,u_2,u_3),(\beta,\omega)\big)(h_1,h_2,h_3)
		&
		=\begin{pmatrix}
			0\\
			0\\
			0
		\end{pmatrix}.
	\end{aligned}
\end{equation*}
Thus $P_x\big((0,0,0),(\beta,\omega)\big)=0$,
	$P_{x\beta}\big((0,0,0),(\beta,\omega)\big)=0$, and  $P_{x\omega}\big((0,0,0),(\beta,\omega)\big)=0$ for all $\beta, \omega$.

\textbf{Step 1:} We try to find the $\ker L(\beta,\omega)$ and $\text{Ran} L(\beta,\omega)$. Recall that
\begin{equation*}
	\begin{aligned}
		L(\beta,\omega)\big(h_{1},h_{2},h_{3}\big)	&
		= \begin{pmatrix}
			L_{\omega}(h_1)\\
			L_{\omega}(h_2)
			\\	L_{\omega}(h_3)
		\end{pmatrix}+	A_{\beta}
		\begin{pmatrix}
			h_1\\
			h_2\\
			h_3
		\end{pmatrix}, \text{where} \  A_{\beta}:=t_{\beta} \begin{pmatrix}
			\mu&\beta\alpha&\beta\gamma\\
			\beta\gamma&\mu&\beta\alpha\\
			\beta\alpha&\beta\gamma&\mu
		\end{pmatrix},
	\end{aligned}
\end{equation*}
For $A_{\beta}$, there are three eigenvalues: $\mu, \delta_{\beta}=\frac{2\mu-\beta(\alpha+\gamma)}{2}t_{\beta} +\mathrm{i}\frac{\sqrt{3}}{2}\beta(\alpha-\gamma)t_{\beta} $,  and $\bar{\delta}_{\beta}=\frac{2\mu-\beta(\alpha+\gamma)}{2}t_{\beta} -\mathrm{i}\frac{\sqrt{3}}{2}\beta(\alpha-\gamma)t_{\beta} $ by  Proposition \ref{S}, and
\begin{equation*}
	A_{\beta}S=S\begin{pmatrix}
		\mu&0&0\\
		0&\delta_{\beta}&0\\
		0&0&\bar{\delta}_{\beta}
	\end{pmatrix}, \quad S=	\begin{pmatrix}
		1&-\frac{1}{2}+\frac{\sqrt{3}}{2}\mathrm{i}&-\frac{1}{2}-\frac{\sqrt{3}}{2}\mathrm{i}\\
		1&-\frac{1}{2}-\frac{\sqrt{3}}{2}\mathrm{i}&-\frac{1}{2}+\frac{\sqrt{3}}{2}\mathrm{i}\\
		1&1&1
	\end{pmatrix}.
\end{equation*}
Then 
\begin{equation*}
	\begin{cases}
		L_{\omega} v_1+\mu v_1=0,\\
		L_{\omega} v_2+\delta_{\beta} v_2=0,\\
		L_{\omega} v_3+\bar{\delta}_{\beta} v_3=0,
	\end{cases} \text{where}	\begin{pmatrix}
		v_1\\v_2\\v_3
	\end{pmatrix}=S^{-1}\begin{pmatrix}
		h_1\\h_2\\h_3
	\end{pmatrix}.
\end{equation*}
By Proposition \ref{spectrum-countable}, it follows that   $\frac{\beta(\alpha+\gamma)-2\mu}{2}t_{\beta}$ is the eigenvalue of $-\Delta_N$, and $\frac{\sqrt{3}}{2}\beta(\alpha-\gamma)t_{\beta}=k\omega$ with $k\in \mathbb{Z}$, that is,
\begin{itemize}
	\item[(i)]  $\frac{\beta(\alpha+\gamma)-2\mu}{2}\frac{\mu}{\mu+\beta(\alpha+\gamma)}\in \{\lambda_{n}:  \lambda_{n} \text{ \ is\  a\ positive\  eigenvalue\  of\ } -\Delta_N, n=1,2,\cdots \}$;
	\item [(ii)] $\frac{\sqrt{3}}{2}\beta(\alpha-\gamma)\frac{\mu}{\mu+\beta(\alpha+\gamma)}\in \{k\omega: k\in\mathbb{Z}\}$. 
\end{itemize}
%
Hence  for $\beta_{0}, \omega_0>0$ 
 satisfying  \eqref{beta-star2} with $\lambda_{n}$ and $k$  chosen properly and fixed,  
\begin{equation*}
	\ker L(\beta_{0},\omega_0)=\{(h_{1},h_{2},h_{3})\},\ \text{where}\ 
	\begin{pmatrix}
		h_1\\h_2\\h_3
	\end{pmatrix}=S\begin{pmatrix}
		0\\ (a-\mathrm{i}bk) f_{n,k}(r)\mathrm{e}^{-\mathrm{i}k\theta}\\(c+\mathrm{i}dk)f_{n,k}(r)\mathrm{e}^{\mathrm{i}k\theta}
	\end{pmatrix}.
\end{equation*}
To guarantee $h_{i}$ are real functions, we need $a=c, b=d$. Thus 
\begin{equation*}
	\begin{aligned}
		\begin{pmatrix}
			h_1\\h_2\\h_3
		\end{pmatrix}&=S\begin{pmatrix}
			0\\ (a-\mathrm{i}bk) f_{n,k}(r)\mathrm{e}^{-\mathrm{i}k\theta}\\(a+\mathrm{i}bk)f_{n,k}(r)\mathrm{e}^{\mathrm{i}k\theta}
		\end{pmatrix}=af_{n,k}(r)\begin{pmatrix}
			-\cos k\theta+\sqrt{3}\sin k\theta\\
			-\cos k\theta-\sqrt{3}\sin k\theta\\
			2\cos k\theta
		\end{pmatrix}+bkf_{n,k}(r)\begin{pmatrix}
			\sin k\theta+\sqrt{3}\cos k\theta\\
			\sin k\theta-\sqrt{3}\cos k\theta\\
			-2\sin k\theta
		\end{pmatrix}.
	\end{aligned}
\end{equation*}
since $(h_{1},h_{2},h_{3})\in E$, it follows that \textcolor{black}{$\frac{2k+2}{3}\in\{2n: n\in\mathbb{Z}\}$.} 
In this case, 
$\dim \ker L(\beta_{0},\omega_0)= 2$. 
On the other hand, recall that
\begin{equation*}
	[\text{Ran} L(\beta_{0},\omega_0)]^{\bot}= \ker \big(L(\beta_{0},\omega_0)\big)^*.
\end{equation*}
In the following, we try to find $ \ker \big(L(\beta_{0},\omega_0)\big)^*$. Notice that
\begin{equation*}
	\begin{aligned}
		(L(\beta,\omega))^{*}(v_{1},v_{2},v_{3})
		= \begin{pmatrix}
			L_{-\omega}(v_1)\\
			L_{-\omega}(v_2)
			\\	L_{-\omega}(v_3)
		\end{pmatrix}+	M_{\beta}
		\begin{pmatrix}
			v_1\\
			v_2\\
			v_3
		\end{pmatrix}, \text{where} \  M_{\beta}:=t_{\beta} \begin{pmatrix}
			\mu&\beta\gamma&\beta\alpha\\
			\beta\alpha&\mu&\beta\gamma\\
			\beta\gamma&\beta\alpha&\mu
		\end{pmatrix}.
	\end{aligned}
\end{equation*}
Hence by Proposition \ref{S} again,
\begin{equation*}
	M_{\beta}S=S\begin{pmatrix}
		\mu&0&0\\
		0&\bar{\delta}_{\beta}&0\\
		0&0&{\delta}_{\beta}
	\end{pmatrix}, \quad S=	\begin{pmatrix}
		1&-\frac{1}{2}+\frac{\sqrt{3}}{2}\mathrm{i}&-\frac{1}{2}-\frac{\sqrt{3}}{2}\mathrm{i}\\
		1&-\frac{1}{2}-\frac{\sqrt{3}}{2}\mathrm{i}&-\frac{1}{2}+\frac{\sqrt{3}}{2}\mathrm{i}\\
		1&1&1
	\end{pmatrix}.
\end{equation*}
Then  for $(v_{1},v_{2},v_{3})\in \ker \big(L(\beta_{0},\omega_0)\big)^*$,
\begin{equation*}
	\begin{cases}
		L_{-\omega} w_1+\mu w_1=0,\\
		L_{-\omega} w_2+(-\lambda_{n}-\mathrm{i}\omega k) w_2=0,\\
		L_{-\omega} w_3+(-\lambda_{n}+\mathrm{i}\omega k) w_3=0,
	\end{cases} \text{where}	\begin{pmatrix}
		w_1\\w_2\\w_3
	\end{pmatrix}=S^{-1}\begin{pmatrix}
		v_1\\v_2\\v_3
	\end{pmatrix}.
\end{equation*}
Thus
\begin{equation*}
		\ker \big(L(\beta_{0},\omega_0)\big)^*=\Big\{(v_{1},v_{2},v_{3}),\ \text{where}\ 
	\begin{aligned}
		\begin{pmatrix}
			v_1\\v_2\\v_3
		\end{pmatrix}=S\begin{pmatrix}
			0\\ (c-\mathrm{i}dk) f_{n,k}(r)\mathrm{e}^{-\mathrm{i}k\theta}\\(c+\mathrm{i}dk)f_{n,k}(r)\mathrm{e}^{\mathrm{i}k\theta}
		\end{pmatrix}, c,d\in\mathbb{R}
	\end{aligned}\Big\},
\end{equation*}
which means \begin{equation}\label{range}
	 [\text{Ran} L(\beta_{0},\omega_0)]^{\bot}=\ker \big(L(\beta_{0},\omega_0)\big)^*=\ker L(\beta_{0},\omega_0)
\end{equation}

 \textbf{Step 2: }  We claim that the complement of $\text{Ran} L(\beta_{0},\omega_0)$ is spanned by the vectors $L_{\beta}(\beta_{0},\omega_0)\textbf{h}_0$ and $L'_{\omega}(\beta_{0},\omega_0)\textbf{h}_0$, where $\textbf{h}_0\in \ker L(\beta_{0},\omega_0)$ and 
\begin{equation*}
	\begin{aligned}
		\textbf{h}_0=	\begin{pmatrix}
			h_1\\h_2\\h_3
		\end{pmatrix}&=S\begin{pmatrix}
			0\\  f_{n,k}(r)\mathrm{e}^{-\mathrm{i}k\theta}\\f_{n,k}(r)\mathrm{e}^{\mathrm{i}k\theta}
		\end{pmatrix}=f_{n,k}(r)\begin{pmatrix}
		-\cos k\theta+\sqrt{3}\sin k\theta\\
		-\cos k\theta-\sqrt{3}\sin k\theta\\
		2\cos k\theta
		\end{pmatrix}.
	\end{aligned}
\end{equation*} Since \eqref{range}, it suffices to prove that $\ker L(\beta_{0},\omega_0)$ is spanned by $L_{\beta}(\beta_{0},\omega_0)\textbf{h}_0$ and $L'_{\omega}(\beta_{0},\omega_0)\textbf{h}_0$.
By straightforward calculations and Lemma \ref{polar-represent},
\begin{equation*}
	\begin{aligned}
	 L_{\beta}(\beta_{0},\omega_0)\textbf{h}_{0}= t_{\beta_0}^2\begin{pmatrix}
			-(\alpha+\gamma)& \alpha & \gamma \\
			\gamma&
			-(\alpha+\gamma) &\alpha\\
			\alpha&\gamma&	-(\alpha+\gamma)
		\end{pmatrix}\begin{pmatrix}
			h_{1}\\h_{2}\\h_{3}
		\end{pmatrix},
	\end{aligned}
\end{equation*}
and 
\begin{equation*}
	\begin{aligned}
	 L'_{\omega}(\beta_{0},\omega_0)\textbf{h}_{0}=
		 \begin{pmatrix}
			\textbf{x}^{\bot}\cdot \nabla h_{1}\\	\textbf{x}^{\bot}\cdot \nabla h_{2}\\	\textbf{x}^{\bot}\cdot \nabla h_{3}
		\end{pmatrix}=\begin{pmatrix}
	\partial_ \theta h_1\\
			\partial_ \theta h_2\\
				\partial_ \theta h_3
		\end{pmatrix}=S\begin{pmatrix}
		0\\  -\mathrm{i}k f_{n,k}(r)\mathrm{e}^{-\mathrm{i}k\theta}\\
		\mathrm{i}kf_{n,k}(r)\mathrm{e}^{\mathrm{i}k\theta}
		\end{pmatrix}.
	\end{aligned}
\end{equation*}
Therefore it is sufficient to prove that there exists $\sigma, \tau\in\mathbb{R}$ such that
$(v_1,v_2,v_3)=\sigma  L_{\beta}(\beta_{0},\omega_0)\textbf{h}_0+\tau   L'_{\omega}(\beta_{0},\mu_0)\textbf{h}_0$ for any $(v_1, v_2,v_3)\in \ker L(\beta_{0},\omega_0)$. Recalling the expression in \eqref{kernel-expression}, if $(v_1, v_2,v_3)\in \ker L(\beta_{0},\omega_0)$, then  there exists $c, d\in\mathbb{R}$ such that 
\begin{equation*}
	\begin{aligned}
		\begin{pmatrix}
			v_1\\v_2\\v_3
		\end{pmatrix}&=S\begin{pmatrix}
			0\\ (c-\mathrm{i}dk) f_{n,k}(r)\mathrm{e}^{-\mathrm{i}k\theta}\\(c+\mathrm{i}dk)f_{n,k}(r)\mathrm{e}^{\mathrm{i}k\theta}
		\end{pmatrix}.
	\end{aligned}
\end{equation*}
Consequently, we only need to prove 
\begin{small}
	\begin{equation*}
		\begin{aligned}
			S\begin{pmatrix}
				0\\ (c-\mathrm{i}dk) f_{n,k}(r)\mathrm{e}^{-\mathrm{i}k\theta}\\(c+\mathrm{i}dk)f_{n,k}(r)\mathrm{e}^{\mathrm{i}k\theta}
			\end{pmatrix}&=\sigma t_{\beta_0,\mu_0}^2\begin{pmatrix}
				-(\alpha+\gamma)& \alpha & \gamma \\
				\gamma&
				-(\alpha+\gamma) &\alpha\\
				\alpha&\gamma&	-(\alpha+\gamma)
			\end{pmatrix}S\begin{pmatrix}
				0\\  f_{n,k}(r)\mathrm{e}^{-\mathrm{i}k\theta}\\f_{n,k}(r)\mathrm{e}^{\mathrm{i}k\theta}
			\end{pmatrix}
		+\tau  S\begin{pmatrix}
				0\\  -\mathrm{i}k f_{n,k}(r)\mathrm{e}^{-\mathrm{i}k\theta}\\
				\mathrm{i}kf_{n,k}(r)\mathrm{e}^{\mathrm{i}k\theta}
			\end{pmatrix}.
		\end{aligned}
	\end{equation*}
\end{small}
Equivalently, by Proposition \ref{S}, it suffices to prove
\begin{small}
	\begin{equation*}
		\begin{aligned}
			\begin{pmatrix}
				0\\ (c-\mathrm{i}dk) f_{n,k}(r)\mathrm{e}^{-\mathrm{i}k\theta}\\(c+\mathrm{i}dk)f_{n,k}(r)\mathrm{e}^{\mathrm{i}k\theta}
			\end{pmatrix}&=\sigma t_{\beta_0,\mu_0}^2S^{-1}\begin{pmatrix}
				-(\alpha+\gamma)& \alpha & \gamma \\
				\gamma&
				-(\alpha+\gamma) &\alpha\\
				\alpha&\gamma&	-(\alpha+\gamma)
			\end{pmatrix}S\begin{pmatrix}
				0\\  f_{n,k}(r)\mathrm{e}^{-\mathrm{i}k\theta}\\f_{n,k}(r)\mathrm{e}^{\mathrm{i}k\theta}
			\end{pmatrix}+
		\tau  \begin{pmatrix}
			0\\  -\mathrm{i}k f_{n,k}(r)\mathrm{e}^{-\mathrm{i}k\theta}\\
			\mathrm{i}kf_{n,k}(r)\mathrm{e}^{\mathrm{i}k\theta}
		\end{pmatrix}\\
			&=\sigma \begin{pmatrix}
				0&0 &0 \\
				0&
				p_{\beta_0,\mu_0} &0\\
				0&0&	\bar{p}_{\beta_0,\mu_0}
			\end{pmatrix}\begin{pmatrix}
				0\\  f_{n,k}(r)\mathrm{e}^{-\mathrm{i}k\theta}\\f_{n,k}(r)\mathrm{e}^{\mathrm{i}k\theta}
			\end{pmatrix}
		+
		\tau  \begin{pmatrix}
			0\\  -\mathrm{i}k f_{n,k}(r)\mathrm{e}^{-\mathrm{i}k\theta}\\
			\mathrm{i}kf_{n,k}(r)\mathrm{e}^{\mathrm{i}k\theta}
		\end{pmatrix}\\
			&=\begin{pmatrix}
				0\\ (\sigma p_{\beta_0,\mu_0}-	\mathrm{i}k\tau ) f_{n,k}(r)\mathrm{e}^{-\mathrm{i}k\theta}
				\\(\sigma \bar{p}_{\beta_0,\mu_0}+	\mathrm{i}k\tau) )f_{n,k}(r)\mathrm{e}^{\mathrm{i}k\theta}
			\end{pmatrix},
		\end{aligned}
	\end{equation*}
\end{small}
where \begin{equation*}
	\begin{aligned}
		p_{\beta_0,\mu_0}&=	\frac{\mu_0^2}{2(\mu_0+\beta_0\alpha+\beta_0\gamma)^2}	[-3(\alpha+\gamma)+\mathrm{i}\sqrt{3}(\alpha-\gamma)].
	\end{aligned}
\end{equation*} Thus
one only needs to prove that $ p_{\beta_0,\mu_0}$ and  $\mathrm{i}k$ are linearly independent, which is obvious.
Hence the claim holds.

\textbf{Step 3:} By applying \cite[Theorem I.19.6]{bifurcation-2012},  the equation $\tilde{G}((u_1,u_2,u_3),(\beta,\omega))=0$ possesses a continuously differentiable solution curve $\big\{\big(\textbf{u}(s),(\beta(s),\omega(s))\big) | s\in(-\delta,\delta)\big\}\subset \big((C^{2,\alpha}(\overline{B_1},\mathbb{R}))^3\cap E\big)\times \mathbb{R}^2$ through $\big(\textbf{u}(0),(\beta(0),\omega(0))\big)=((0,0,0),(\beta_{0},\omega_0))$, $\textbf{u}(s)=s\textbf{h}_0+\psi\big(s\textbf{h}_0, (\beta(s),\omega(s)\big)$, and $\textbf{u}'(0)=\textbf{h}_0$.
\begin{remark}
	By \cite[Remark I.19.7]{bifurcation-2012}, 
as a matter of fact, we  obtain a $2$-dimensional solution set 
$$\Big\{\Big(s\textbf{h}+\psi\big(s\textbf{h},(\beta,\omega)(s,\textbf{h}))\big), (\beta,\omega)(s,\textbf{h})\Big)\Big\}$$ for $\tilde{G}(\textbf{u},(\beta,\omega))=0$ that depends in a continuously differentiable way on $(s,\textbf{h})$ and $\textbf{u}'(0)=\textbf{h}$.
\end{remark}

\subsection{Proof of Theorem \ref{Neumann}(iv)}
The statement holds directly from the above analysis in Sections 2.2,  2.3.

\subsection{Instability}
	The stability of a positive solution $(u_1,u_2, u_3)$ to \eqref{sys-ome} is determined by the eigenvalue problem:
\begin{equation}\label{eigenvalue-stability}
	\begin{cases}
		L_{\omega}\xi=\lambda \xi+(\mu-2\mu u_1-\beta\alpha u_2-\beta\gamma u_3)\xi-\beta\alpha u_1 \eta-\beta \gamma u_1\zeta,& \text{in}\ B_1,\\
		L_{\omega}\eta=\lambda \eta+(\mu-2\mu u_2-\beta\alpha u_3-\beta\gamma u_1)\eta-\beta \gamma u_2\xi-\beta\alpha u_2 \zeta,& \text{in}\ B_1,\\
		L_{\omega}\zeta=\lambda \zeta+(\mu-2\mu u_3-\beta\alpha u_1-\beta\gamma u_2)\zeta-\beta\alpha u_3 \xi-\beta \gamma u_3\eta,& \text{in}\ B_1,
	\end{cases}
\end{equation}
with Neumann or Dirichlet boundary conditions. 
We consider the principal eigenvalue $\lambda(u_1,u_2,u_3)$, which has the smallest real part among all the spectrum points. Then the  solution $(u_1,u_2,u_3)$ is unstable if $\Re \lambda(u_1,u_2,u_3)<0$.
\begin{proof}[Proof of Theorem \ref{Neumann}(v)]
Let  $(u_1,u_2,u_3)=t_{\beta}(1,1,1)$ in \eqref{eigenvalue-stability} with $t_\beta=\frac{\mu}{\mu+\beta\alpha+\beta\gamma}$,  then it is reduced to 
\begin{equation*}
	\begin{cases}
		L_{\omega}\xi=\lambda\xi-t_\beta (\mu  \xi+\beta\alpha  \eta+\beta \gamma \zeta),& \text{in}\ B_1,\\
		L_{\omega}\eta=\lambda\eta-t_\beta(\mu  \eta+\beta \gamma \xi+\beta\alpha \zeta),& \text{in}\ B_1,\\
		L_{\omega}\zeta=\lambda\zeta-t_\beta(\mu  \zeta+\beta\alpha  \xi+\beta \gamma\eta),& \text{in}\ B_1,\\
		\frac{\partial \xi}{\partial n}=	\frac{\partial \eta}{\partial n}=	\frac{\partial \zeta}{\partial n}=0,&\text{on}\ \partial B_1.
	\end{cases}
\end{equation*}
Equivalently, 
\begin{equation*}
	\begin{aligned}
		\begin{pmatrix}
			(L_{\omega}-\lambda)\xi\\
			(L_{\omega}-\lambda)\eta
			\\	(L_{\omega}-\lambda)\zeta
		\end{pmatrix}+	A_{\beta}
		\begin{pmatrix}
			\xi\\
			\eta\\
			\zeta
		\end{pmatrix}=	\begin{pmatrix}
			0\\
			0\\
			0
		\end{pmatrix}, \text{where} \  A_{\beta}:=t_{\beta} \begin{pmatrix}
			\mu&\beta\alpha&\beta\gamma\\
			\beta\gamma&\mu&\beta\alpha\\
			\beta\alpha&\beta\gamma&\mu
		\end{pmatrix}.
	\end{aligned}
\end{equation*}
For $A_{\beta}$, there are three eigenvalues: $\mu, \delta_{\beta}=\frac{2\mu-\beta(\alpha+\gamma)}{2}t_{\beta} +\mathrm{i}\frac{\sqrt{3}}{2}\beta(\alpha-\gamma)t_{\beta} $,  and $\bar{\delta}_{\beta}=\frac{2\mu-\beta(\alpha+\gamma)}{2}t_{\beta} -\mathrm{i}\frac{\sqrt{3}}{2}\beta(\alpha-\gamma)t_{\beta} $ by  Proposition \ref{S}, and
\begin{equation*}
	\begin{cases}
		(L_{\omega}-\lambda) v_1+\mu v_1=0,\\
		(L_{\omega}-\lambda) v_2+\delta_{\beta} v_2=0,\\
		(L_{\omega}-\lambda) v_3+\bar{\delta}_{\beta} v_3=0,
	\end{cases} \text{where}	\begin{pmatrix}
		v_1\\v_2\\v_3
	\end{pmatrix}=S^{-1}\begin{pmatrix}
		\xi \\ \eta\\ \zeta
	\end{pmatrix}.
\end{equation*}
For $\beta>\frac{2\mu}{\alpha+\gamma}$, then $\min_{\lambda} \Re \lambda= \frac{2\mu-\beta(\alpha+\gamma)}{2}\frac{\mu}{\mu+\beta\alpha+\beta\gamma}<0$. Therefore the solution $\frac{\mu}{\mu+\beta\alpha+\beta\gamma}(1,1,1)$ is unstable if $\beta>\frac{2\mu}{\alpha+\gamma}$.
\end{proof}

\section{The DBC system}
We consider the nonlinear elliptic system
\begin{equation}\label{system2-dp}
	\begin{cases}
		L_{\omega}u_1=\mu u_1(1-u_1)-\beta \alpha u_1u_2-\beta \gamma u_1 u_3,& \text{in}\ B_1,\\
		L_{\omega}u_2=\mu u_2(1-u_2)-\beta \gamma u_1u_2-\beta \alpha u_2 u_3,& \text{in}\ B_1,\\
		L_{\omega}u_3=\mu u_3(1-u_3)-\beta \alpha u_1u_3-\beta \gamma u_2 u_3,& \text{in}\ B_1,\\
	(u_1,u_2,u_3)=(\phi_{1},\phi_{2},\phi_{3}),&\text{on}\ \partial B_1,
	\end{cases}
\end{equation}
with $0\le \phi_{i}(\textbf{x})< 1$, $\phi_{i}\not\equiv 0$ and \begin{equation*}
\phi_{2}(\textbf{x})=\phi_{1}(R_{\frac{2\pi}{3}}\textbf{x}), \ \phi_{3}(\textbf{x})=\phi_{2}(R_{\frac{2\pi}{3}}\textbf{x})\  \text{on}\ \partial B_1.
\end{equation*}

We want to prove the existence of nontrivial positive ($	u_i>0 \ \text{in}\  B_{1},\  \forall i=1,2,3$) symmetric solutions such that
\begin{equation*}
	u_2(\textbf{x})=u_1(R_{\frac{2\pi}{3}}\textbf{x}), \  u_3(\textbf{x})=u_2(R_{\frac{2\pi}{3}}\textbf{x}).
\end{equation*}
\begin{lemma}
	Any positive solution of \eqref{system2-dp} is  such that $0<u_i<1$ in ${B_1}$ for any $i=1, 2, 3$.
\end{lemma}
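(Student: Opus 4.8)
The plan is to reproduce, for the inhomogeneous Dirichlet data, the maximum-principle argument already used for the Neumann lemma. Let $(u_1,u_2,u_3)\in(C^{2,\alpha}(\overline{B_1}))^3$ be a positive solution of \eqref{system2-dp}; note first that $u_i\ge 0$ on all of $\overline{B_1}$, since $u_i=\phi_i\ge 0$ on $\partial B_1$. Writing $\beta\alpha u_iu_j+\beta\gamma u_iu_k$ for the competition terms appearing in the $i$-th equation of \eqref{system2-dp}, I would set $w_i:=u_i-1$ and compute
\begin{equation*}
	(L_{\omega}+\mu)w_i=-\mu(1-u_i)^2-\beta\alpha u_iu_j-\beta\gamma u_iu_k\le 0\qquad\text{in }B_1,
\end{equation*}
the inequality following from $\mu,\beta,\alpha,\gamma>0$ and $u_i,u_j,u_k\ge 0$. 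On the boundary, $w_i=\phi_i-1\le\max_{\partial B_1}\phi_i-1<0$, where the strict inequality uses that $\phi_i$ is continuous on the compact set $\partial B_1$ together with the pointwise bound $\phi_i<1$.

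Next I would apply the maximum principle to the operator $L_{\omega}+\mu$, whose zeroth-order coefficient $\mu$ is strictly positive and whose first-order term $\omega\textbf{x}^{\bot}\cdot\nabla$ is a smooth bounded drift (hence harmless for the comparison principle): from $(L_{\omega}+\mu)w_i\le 0$ in $B_1$ and $w_i\le 0$ on $\partial B_1$ one gets $w_i\le 0$ in $\overline{B_1}$, i.e. $u_i\le 1$ --- this is the same \cite[Maximum Principle 4.1]{amann-1976} invoked in the Neumann case, now with Dirichlet in place of Neumann conditions. To upgrade this to the strict bound claimed in the lemma, I would invoke Hopf's strong maximum principle: if $w_i$ vanished at some interior point of $B_1$, then $0$ would be a nonnegative interior maximum of the subsolution $w_i$ of the equation $(L_{\omega}+\mu)v=0$, forcing $w_i\equiv 0$ and contradicting $w_i<0$ on $\partial B_1$; hence $w_i<0$, i.e. $u_i<1$, throughout $B_1$. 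For the lower bound one may either take $u_i>0$ in $B_1$ as part of the meaning of \emph{positive solution}, or re-derive it by rewriting the $i$-th equation as $(L_{\omega}+V_i)u_i=0$ with bounded potential $V_i:=-\mu+\mu u_i+\beta\alpha u_j+\beta\gamma u_k$ and applying the strong maximum principle to the nonnegative, nontrivial (since $\phi_i\not\equiv 0$) function $u_i$. Combining the two bounds yields $0<u_i<1$ in $B_1$ for $i=1,2,3$.

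I do not expect a genuine obstacle here: the argument is the Dirichlet twin of the lemma already proved for \eqref{system2}, and only a few routine points need attention --- using the compactness of $\partial B_1$ to pass from $\phi_i<1$ pointwise to $\max_{\partial B_1}\phi_i<1$; checking that the non-self-adjoint drift $\omega\textbf{x}^{\bot}\cdot\nabla$ disturbs neither the weak comparison principle nor the strong maximum principle; and, if the strict positivity $u_i>0$ is not simply built into the definition of positive solution, treating it separately through the linearised equation, since the Dirichlet data are only required to be nonnegative and may vanish on part of $\partial B_1$.
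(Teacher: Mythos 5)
Your proposal is correct and follows essentially the same route as the paper: the substitution $w_i=u_i-1$, the computation $(L_{\omega}+\mu)w_i\le 0$, the weak maximum principle to get $u_i\le 1$, and then an interior argument for strictness. The only cosmetic difference is that the paper obtains $u_i<1$ by a direct second-derivative test at a hypothetical interior maximum point (where $-\Delta u_i\ge 0$, $\nabla u_i=0$ force $0\le L_{\omega}u_i<0$), whereas you invoke the strong maximum principle --- both are valid and interchangeable here.
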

\begin{proof}
Take $v_i=u_i-1$, then     
	\begin{equation*}
		\begin{cases}
			(L_{\omega}+\mu) v_i\le 0,& \text{in}\ B_1,\\
			v_i<  0,& \text{on}\  \partial B_1.
		\end{cases}
	\end{equation*}
	By \cite[Corollary 3.2]{trudinger},  $u_i\le 1$  in $B_1$. Furthermore,
	if $u_i(\textbf{x}_0)=\max_{B_1} u_i=1$, then $-\Delta u_i(\textbf{x}_0)\ge 0$ and $\nabla u_i(\textbf{x}_0)=0$, 
	hence 
	$
	0\le L_{\omega} u_i(\textbf{x}_0)<0,
	$
	a contradiction.
	We conclude that $0<u_i<1$  in ${B_1}$.
\end{proof}

Consider the space 
\begin{equation*}
	\tilde{E}:=\{(u_1, u_2, u_3): u_{i}\in C(\overline{B_{1}}), u_i(\textbf{x})=\phi_{i}(\textbf{x}) \ \text{on}\ \partial B_1,  u_2(\textbf{x})=u_1(R_{\frac{2\pi}{3}}\textbf{x}), u_3(\textbf{x})=u_2(R_{\frac{2\pi}{3}}\textbf{x}) \},
\end{equation*}
with the norm 
\begin{equation*}
	\|(u_1, u_2, u_3)\|_{\tilde{E}}:=\max_{\textbf{x}\in\overline{B_1}}|u_1(\textbf{x})|=\max_{\textbf{x}\in\overline{B_1}}|u_2(\textbf{x})|=\max_{\textbf{x}\in\overline{B_1}}|u_3(\textbf{x})|.
\end{equation*}

\begin{proof}[Proof of Theorem \ref{Dirichlet}]
		For the system 
	\begin{equation}\label{system-inverse2nn-dp}
		\begin{cases}
			(L_{\omega}+\mu+2\beta\alpha+2\beta\gamma) z_1=\mu u_1(2-u_1)+2\beta(\alpha+\gamma) u_{1}-\beta \alpha u_1u_2-\beta \gamma u_1 u_3,& \text{in}\ B_1,\\
			(L_{\omega}+\mu+2\beta\alpha+2\beta\gamma) z_2=\mu u_2(2-u_2)+2\beta(\alpha+\gamma)  u_{2}-\beta \gamma u_1u_2-\beta \alpha u_2 u_3,& \text{in}\ B_1,\\
			(L_{\omega}+\mu+2\beta\alpha+2\beta\gamma)z_3=\mu u_3(2-u_3)+2\beta(\alpha+\gamma)  u_{3}-\beta \alpha u_1u_3-\beta \gamma u_2 u_3,& \text{in}\ B_1,\\
			(z_1,z_2,z_3)=(\phi_{1},\phi_{2},\phi_{3}),&\text{on}\ \partial B_1,
		\end{cases}
	\end{equation}
	we try to find positive fixed points in $\tilde{E}$.  Define a map
	\begin{equation*}
		\Phi(u_{1}, u_{2},u_{3})=(z_{1}, z_{2}, z_{3}) \Leftrightarrow (z_{1}, z_{2}, z_{3}) \ \text{solves}\  \eqref{system-inverse2nn-dp}.
	\end{equation*}
	Suppose 
	\begin{equation*}
		(u_{1}, u_{2},u_{3})\in B:=\{(u_{1}, u_{2},u_{3})\in E: 0\le u_{i}(\textbf{x})\le 2, i=1,2,3\}.
	\end{equation*}
	Firstly, it is easy to see that the RHS of each equation must be non-negative and less than $\mu+4\beta(\alpha+\gamma)$. By \cite[Theorem 3.5]{trudinger}, we know $z_{i}> 0$ in ${B_{1}}$. 
	
\textbf{Claim.} $\Phi: B\rightarrow B$ is a compact operator.
	 Set $v_{i}=z_{i}-2$, then 
	\begin{equation*}
		\begin{cases}
			(L_{\omega}+\mu+2\beta\alpha+2\beta\gamma)v_{i}<0, & \text{in} \ B_{1},\\
			v_{i}<0,&\text{on} \ \partial B_{1}.
		\end{cases}
	\end{equation*}
	Hence $0\le z_{i}\le 2$ by the maximum principle.
	Thus the claim holds by \cite[Theorem 4.2]{amann-1976} or \cite[Theorem 2.4.2.5]{neumann-book}. 
	
	Notice that $B$ is a bounded convex closed set in $\tilde{E}$. If $\Phi\big((u_{1}, u_{2}, u_{3})\big)\neq (u_{1}, u_{2}, u_{3})$ ( $\forall (u_{1}, u_{2}, u_{3})\in\partial B$), then by the Rothe fixed point theorem \cite[Corollary 13.1]{ziedler1993},
	\begin{equation*}
		\text{deg}(I-\Phi, B, (0,0,0))=1.
	\end{equation*}
	Thus there exists a fixed point, which means we have a positive solution for \eqref{system2-dp}.
\end{proof}

\section{Appendix}

\medskip
\subsection{Determinant of the matrix}
Consider the matrix \begin{equation*}
	M(\beta)=	\begin{pmatrix}
		\mu&\beta\alpha&\beta\gamma\\
		\beta\gamma&\mu&\beta\alpha\\
		\beta\alpha&\beta\gamma&\mu
	\end{pmatrix}.
\end{equation*}
The determinant of $M(\beta)$ is $(\alpha^3+\gamma^3)\beta^3-3\alpha\gamma\mu \beta^2+\mu^3$, which is always positive  for $\beta\in(0+\infty)$ by the following proposition.
\begin{prop}\label{det-positive}
	Consider the function $f(x)=(\alpha^3+\gamma^3)x^3-3\alpha\gamma\mu x^2+\mu^3$ with $\alpha>\gamma>0$, then $f(x)>0$ in $(0,+\infty)$.
\end{prop}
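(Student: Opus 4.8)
The plan is to locate the global minimum of $f$ on $(0,+\infty)$ by elementary calculus and then check that this minimum value is strictly positive (recall $\mu>0$ throughout). First I would compute
\begin{equation*}
	f'(x)=3(\alpha^3+\gamma^3)x^2-6\alpha\gamma\mu x=3x\big[(\alpha^3+\gamma^3)x-2\alpha\gamma\mu\big].
\end{equation*}
Since $\alpha^3+\gamma^3>0$ and $\alpha\gamma\mu>0$, the unique critical point of $f$ in $(0,+\infty)$ is $x_*:=\dfrac{2\alpha\gamma\mu}{\alpha^3+\gamma^3}$; moreover $f'<0$ on $(0,x_*)$ and $f'>0$ on $(x_*,+\infty)$, because the bracket is negative exactly when $(\alpha^3+\gamma^3)x<2\alpha\gamma\mu$. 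Hence $f$ is strictly decreasing on $(0,x_*]$ and strictly increasing on $[x_*,+\infty)$, so $\inf_{(0,+\infty)}f=f(x_*)$.

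Next I would evaluate $f$ at $x_*$. Writing $A=\alpha^3+\gamma^3$ and $B=\alpha\gamma$, so that $x_*=2B\mu/A$, a direct substitution yields
\begin{equation*}
	f(x_*)=A\cdot\frac{8B^3\mu^3}{A^3}-3B\mu\cdot\frac{4B^2\mu^2}{A^2}+\mu^3
	=\mu^3\Big(1-\frac{4B^3}{A^2}\Big)=\frac{\mu^3}{A^2}\,\big(A^2-4B^3\big).
\end{equation*}
Since $\mu>0$ and $A^2>0$, the inequality $f(x_*)>0$ is equivalent to $A^2>4B^3$, that is, $(\alpha^3+\gamma^3)^2>4\alpha^3\gamma^3$.

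Finally, the decisive algebraic identity is
\begin{equation*}
	(\alpha^3+\gamma^3)^2-4\alpha^3\gamma^3=\alpha^6-2\alpha^3\gamma^3+\gamma^6=(\alpha^3-\gamma^3)^2,
\end{equation*}
which is strictly positive because $\alpha>\gamma>0$ forces $\alpha^3-\gamma^3>0$. Therefore $f(x_*)>0$, and consequently $f(x)\ge f(x_*)>0$ for every $x\in(0,+\infty)$ (and, together with $f(0)=\mu^3>0$, on all of $[0,+\infty)$).

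The argument is entirely elementary, so there is no genuine obstacle; the only point requiring care is the bookkeeping in evaluating $f(x_*)$, after which the crucial observation is that the discriminant-type quantity $A^2-4B^3$ collapses to the perfect square $(\alpha^3-\gamma^3)^2$. One could alternatively bypass calculus — for instance by exhibiting $f$ directly as a sum of squares, or by invoking the AM--GM inequality on $\alpha^3$ and $\gamma^3$ — but the critical-point computation above is the most transparent route, and it also makes visible that positivity degenerates precisely in the symmetric limit $\alpha=\gamma$ (excluded here), which is consistent with the role of the asymmetry ratio $\alpha/\gamma$ elsewhere in the paper.
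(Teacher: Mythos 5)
Your argument is correct and coincides with the paper's own proof: both locate the unique critical point $x_*=2\alpha\gamma\mu/(\alpha^3+\gamma^3)$ via $f'(x)=3x\big[(\alpha^3+\gamma^3)x-2\alpha\gamma\mu\big]$ and reduce positivity of the minimum to the identity $(\alpha^3+\gamma^3)^2-4\alpha^3\gamma^3=(\alpha^3-\gamma^3)^2>0$. Your write-up is in fact slightly more careful than the paper's, since you also record the sign of $f'$ on either side of $x_*$.
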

\begin{proof}
	By direct calculation,
	$f'(x)=3x\big[(\alpha^3+\gamma^3)x-2\alpha\gamma\mu\big]$, thus 
	\begin{equation*}
		\begin{aligned}
			\min_{x\in(0+\infty)} f(x)=f\big(\frac{2\alpha\gamma\mu}{\alpha^3+\gamma^3}\big)&=\frac{8\alpha^3\gamma^3\mu^3}{(\alpha^3+\gamma^3)^2}-3\alpha\gamma\mu\frac{4\alpha^2\gamma^2\mu^2}{(\alpha^3+\gamma^3)^2}+\mu^3\\
			&=\frac{\mu^3}{(\alpha^3+\gamma^3)^2}\big[-4\alpha^3\gamma^3+(\alpha^3+\gamma^3)^2\big]\\
			&=\frac{\mu^3}{(\alpha^3+\gamma^3)^2}(\alpha^3-\gamma^3)^2>0.
		\end{aligned}
	\end{equation*}
\end{proof}
Actually $(\alpha^3+\gamma^3)\beta^3-3\alpha\gamma\mu \beta^2+\mu^3=(\mu+\beta\alpha+\beta\gamma)\big[\beta^2(\alpha^2+\gamma^2-\alpha\gamma)-\beta\mu(\alpha+\gamma)+\mu^2\big]$. Actually the only real root of $f(x)$ is $x=-\frac{\mu}{\alpha+\gamma}$. 
Moreover, the inverse of $M(\beta)$ is 
\begin{equation*}
	M(\beta)^{-1}=\frac{1}{(\alpha^3+\gamma^3)\beta^3-3\alpha\gamma\mu \beta^2+\mu^3}	\begin{pmatrix}
		\mu^2-\alpha\gamma\beta^2&\gamma^2\beta^2-\alpha\mu \beta& \alpha^2\beta^2-\gamma\mu\beta\\
		\alpha^2\beta^2-\gamma\mu\beta&	\mu^2-\alpha\gamma\beta^2&\gamma^2\beta^2-\alpha\mu \beta\\
		\gamma^2\beta^2-\alpha\mu \beta&	\alpha^2\beta^2-\gamma\mu\beta&	\mu^2-\alpha\gamma\beta^2
	\end{pmatrix}.
\end{equation*}

\medskip
\subsection{Diagonalization of the matrix}
\begin{prop}\label{S}
	Assume $\tau>\delta$, then
	there exists an invertible matrix $S$ such that 
	\begin{equation*}
		AS=S\begin{pmatrix}
			\mu+\tau+\delta&0&0\\0&m&0\\0&0&\bar{m}
		\end{pmatrix}, \ \text{where}\  
		A=\begin{pmatrix}
			\mu&\tau&\delta\\
			\delta&\mu&\tau\\
			\tau&\delta&{\mu}
		\end{pmatrix},
		S=	\begin{pmatrix}
			1&-\frac{1}{2}+\frac{\sqrt{3}}{2}\mathrm{i}&-\frac{1}{2}-\frac{\sqrt{3}}{2}\mathrm{i}\\
			1&-\frac{1}{2}-\frac{\sqrt{3}}{2}\mathrm{i}&-\frac{1}{2}+\frac{\sqrt{3}}{2}\mathrm{i}\\
			1&1&1
		\end{pmatrix},
	\end{equation*}
	where $m=\frac{2\mu-\tau-\delta}{2}+\mathrm{i}\frac{\sqrt{3}(\tau-\delta)}{2}$. Moreover,
	\begin{equation*}
		S^{-1}=	\frac16\begin{pmatrix}
			2&2&2\\
			-1-\sqrt{3}\mathrm{i}&-1+\sqrt{3}\mathrm{i}&2\\
			-1+\sqrt{3}\mathrm{i}&-1-\sqrt{3}\mathrm{i}&2
		\end{pmatrix}.
	\end{equation*}
\end{prop}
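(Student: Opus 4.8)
The statement is a purely linear-algebraic fact, and the approach is to exploit that $A$ is a circulant matrix. The plan is to write $A=\mu I+\tau P+\delta P^{2}$, where $P$ is the $3\times 3$ cyclic permutation with $Pe_{1}=e_{3}$, $Pe_{2}=e_{1}$, $Pe_{3}=e_{2}$, so that $I,P,P^{2}$ account exactly for the three diagonals of $A$. Since $P^{3}=I$, the matrix $P$ is diagonalised by the cube roots of unity: with $\zeta:=-\tfrac12+\tfrac{\sqrt3}{2}\mathrm{i}$, a direct check shows that the three columns of $S$, namely $(1,1,1)^{\top}$, $(\zeta,\zeta^{2},1)^{\top}$ and $(\zeta^{2},\zeta,1)^{\top}$, are eigenvectors of $P$ for the eigenvalues $1$, $\zeta$ and $\zeta^{2}$ respectively; in particular $S$ is invertible, being a relabelling of the discrete Fourier matrix of $\mathbb{Z}/3\mathbb{Z}$.

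Because $A$ is a polynomial in $P$, the columns of $S$ are simultaneously eigenvectors of $A$, and the eigenvalue attached to the eigenvalue $t\in\{1,\zeta,\zeta^{2}\}$ of $P$ is obtained by substituting into $\mu+\tau t+\delta t^{2}$. This yields $\mu+\tau+\delta$ for the first column and, using $\zeta^{2}=\bar\zeta$ and $1+\zeta+\zeta^{2}=0$, the value $\mu+\tau\zeta+\delta\zeta^{2}=\tfrac{2\mu-\tau-\delta}{2}+\mathrm{i}\tfrac{\sqrt3(\tau-\delta)}{2}=m$ for the second and $\mu+\tau\zeta^{2}+\delta\zeta=\bar m$ for the third; reading these three identities off column by column is precisely $AS=S\,\mathrm{diag}(\mu+\tau+\delta,m,\bar m)$. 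For the explicit inverse I would note that $-1\mp\sqrt3\,\mathrm{i}=2\zeta^{\pm2}$, so the claimed matrix equals $\tfrac13$ times the Fourier matrix with conjugated entries (in the matching row/column order), and then verify $SS^{-1}=I$ by a one-line computation: each diagonal entry of the product equals $\tfrac13(1+1+1)=1$, while each off-diagonal entry equals $\tfrac13(1+\zeta+\zeta^{2})=0$, using $\zeta^{3}=1$ and the character-orthogonality relation $1+\zeta+\zeta^{2}=0$.

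There is no genuine obstacle here: the computation is routine. The only point that needs a little care is that the matrix $S$ in the statement is written with the all-ones row in the third position rather than the first (as in the textbook Vandermonde/Fourier matrix), so I would carry out the three eigenvector checks and the product $SS^{-1}=I$ explicitly rather than quoting a black-box circulant lemma; this both pins down the ordering convention and keeps the proof self-contained. Finally, the hypothesis $\tau>\delta$ plays no role in any of these algebraic identities; it is retained only because in the applications it guarantees $\mathrm{Im}\,m=\tfrac{\sqrt3}{2}(\tau-\delta)\neq0$, so that $m$ and $\bar m$ are genuinely non-real and the three eigenvalues of $A$ are distinct.
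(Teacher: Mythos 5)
Your proposal is correct, and since the paper's own proof of this proposition is literally the single sentence ``The conclusion is true by direct calculations,'' your argument is just a well-organized version of that same verification: recognizing $A=\mu I+\tau P+\delta P^{2}$ as a circulant diagonalized by the cube-roots-of-unity (Fourier) matrix explains where $S$ comes from and reduces the eigenvalue and inverse checks to $1+\zeta+\zeta^{2}=0$. Your side remark that $\tau>\delta$ is irrelevant to the algebra and only serves to make $m$ non-real (hence the eigenvalues distinct) is also accurate and worth keeping.
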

\begin{proof}
	The conclusion is true by direct calculations.
\end{proof}


\begin{thebibliography}{99}
	\setlength{\itemsep}{0pt}
	
	

	
\bibitem{amann-1976}
H. Amann.  Fixed point equations and nonlinear eigenvalue problems in ordered Banach spaces. SIAM review, 1976, 18(4): 620-709.
	
\bibitem{Bartsch-2013}
T. Bartsch.  Bifurcation in a multicomponent system of nonlinear Schrödinger equations. Journal of Fixed Point Theory and Applications, 2013, 13(1): 37-50.
	
\bibitem{Bartsch-2015}
T. Bartsch,  R. Tian, Z Q. Wang.  Bifurcations for a coupled Schrödinger system with multiple components. Zeitschrift für angewandte Mathematik und Physik, 2015, 66: 2109-2123.

\bibitem{BerZil2018}
 H. Berestycki,  A. Zilio, Predators-prey models with competition, part i: Existence, bifurcation and qualitative properties.  Communications in Contemporary Mathematics, 2018, 20, 1850010, 53pp.  

\bibitem{BerZil2019}
H. Berestycki,  A. Zilio. Predator-prey models with competition: The emergence of territoriality. The American Naturalist, 2019,193: 436-446. 
	
\bibitem{Caffarelli}
L A. Caffarelli, A L. Karakhanyan, F H. Lin.  The geometry of solutions to a segregation problem for nondivergence systems. Journal of Fixed Point Theory and Applications, 2009, 5: 319-351.
	
\bibitem{chang2005}
K-C. Chang.  Methods in nonlinear analysis. Berlin: Springer, 2005.
	
\bibitem{T-2005}
M. Conti,  S. Terracini, G. Verzini.  Asymptotic estimates for the spatial segregation of competitive systems. Advances in Mathematics, 2005, 195(2): 524-560.
	 
\bibitem{T-2006}
M. Conti,  S. Terracini, G. Verzini. Uniqueness and least energy property for solutions to strongly competing systems. Interfaces and Free Boundaries, 2006, 8(4): 437-446.

\bibitem{CanCosBook} 
R. Cantrell, C. Cosner.  Spatial ecology via reaction-diffusion equations.
Wiley Series in Mathematical and Computational Biology. John Wiley \& Sons, Ltd., Chichester, 2003. 
		
\bibitem{courant-book}
R. Courant, D. Hilbert.  Methods of mathematical physics: partial differential equations. John Wiley \& Sons, 2008.
	
\bibitem{Dancer-Du-1995}
E N. Dancer, Y.  Du.  Positive solutions for a three-species competition system with diffusion-I. General existence results. Nonlinear Analysis: Theory, Methods \& Applications, 1995, 24(3): 337-357.
	
\bibitem{Dancer-Du-1995-2}
E N. Dancer, Y.  Du. Positive solutions for a three-species competition system with diffusion-II. The case of equal birth rates. Nonlinear Analysis: Theory, Methods \& Applications, 1995, 24(3): 359-373.
	
\bibitem{Dancer-2011}
E N. Dancer,  K. Wang,  Z. Zhang. Uniform H\"older estimate for singularly perturbed parabolic systems of Bose–Einstein condensates and competing species. Journal of Differential Equations, 2011, 251(10): 2737-2769.
	
\bibitem{Dancer-2012}
E N. Dancer,  K. Wang,  Z. Zhang. Dynamics of strongly competing systems with many species. Transactions of the American Mathematical Society, 2012, 364(2): 961-1005.
		
\bibitem{trudinger}
D. Gilbarg, N. S. Trudinger. Elliptic partial differential equations of second order. Berlin: springer, 1977.

\bibitem{GirHil2022} L. Girardin, D. Hilhorst. Spatial segregation limit of traveling wave solutions for a fully nonlinear strongly coupled competitive system. Electron. Res. Arch. 30 (2022), no. 5, 1748-1773

\bibitem{GirNad2018} L. Girardin, G.  Nadin. Competition in periodic media: II - segregative limit of pulsating fronts and ``unity is not strength''-type result. J. Differential Equations 265 (2018), no. 1, 98-156.
	
\bibitem{neumann-book}
P. Grisvard.  Elliptic problems in nonsmooth domains. Society for Industrial and Applied Mathematics, 2011.
		
\bibitem{guo2023-JMPA}
Y. Guo.  The nonexistence of vortices for rotating Bose-Einstein condensates in non-radially symmetric traps. Journal de Mathématiques Pures et Appliquées, 2023, 170: 1-32.
	
\bibitem{T-2009}
 B. Helffer,  T. Hoffmann-Ostenhof, S. Terracini.  Nodal domains and spectral minimal partitions. Annales de l'IHP Analyse non linéaire. 2009, 26(1): 101-138.
 
 \bibitem{resolvent-book}
 P. D. Hislop, I. M. Sigal.  Introduction to Spectral Analysis. 1996.
	
\bibitem{eigenvalue2006}
A. Henrot.  Extremum problems for eigenvalues of elliptic operators. Springer Science \& Business Media, 2006.

\bibitem{IS-2009}
Y. Ikeno, I. Shigekawa. Spectra of non-symmetric operators.   Lectures in Tohoku University,  2009. 

\bibitem{bifurcation-2012}
H. Kielhöfer.  Bifurcation theory. Springer, 2012.

\bibitem{Kis81} 
K. Kishimoto. Instability of non-constant equilibrium solutions of a system of competition-diffusion equations. J. Math. Biol. 13 (1981), 105-114.

\bibitem{Kis82}
K. Kishimoto.  The diffusive Lotka-Volterra system with three species can have a stable non-constant equilibrium solution. J. Math. Biol. 1982, 16: 103-112.

\bibitem{KisMimYos83} 
K. Kishimoto, M. Mimura, K. Yoshida. Stable spatio-temporal oscillations of diffusive Lotka-Volterra system with three or more species. J. Math. Biol. 1983, 18: 213-221.

\bibitem{KisWei1985}
K. Kishimoto, H. Weinberger.
The spatial homogeneity of stable equilibria of some reaction-diffusion systems on convex domains. J. Differential Equations, 1985, 58: 15-21.

\bibitem{FA-Kreyszig}
E. Kreyszig.  Introductory functional analysis with applications. John Wiley \& Sons, 1991.

\bibitem{LanMin2019} F. Lanzara, E. Montefusco. On the limit configuration of four species strongly competing systems. NoDEA Nonlinear Differential Equations Appl. 26 (2019), no. 3, Paper No. 19, 17 pp.

\bibitem{Mim79} 
M. Mimura.  Asymptotic behaviors of a parabolic system related to a planktonic prey and predator model. SIAM J. Appl. Math. 1979,  37(3): 499-512.

\bibitem{Mim1981}
 M. Mimura. Stationary pattern of some density-dependent diffusive system with competing dynamics. Hiroshima Math. J. 1981, 11(3): 621-635. 

\bibitem{MN-2011}
 H. Murakawa, H. Ninomiya.  Fast reaction limit of a three-component reaction-diffusion system. Journal of mathematical analysis and applications, 2011, 379(1): 150-170.
	
\bibitem{susanna-2022}
A. Salort, S. Terracini, G. Verzini, A. Zilio. Rotating Spirals in segregated reaction-diffusion systems. 	arXiv: 2202.10369.  
	
\bibitem{Spiral-waves2023}
B. Sandstede, A. Scheel.  Spiral waves: linear and nonlinear theory.  American Mathematical Society, 2023.
	
\bibitem{SSW-1997}
B. Sandstede,  A. Scheel, C.  Wulff. Center-manifold reduction for spiral waves. Comptes Rendus de l'Acad\'{e}mie des Sciences. S\'{e}rie I. Math\'{e}matique, 1997, 324(2): 153-158.

\bibitem{SSW-1997-2}
 B. Sandstede,  A. Scheel, C.  Wulff.  Dynamics of spiral waves on unbounded domains using center-manifold reductions. Journal of differential equations, 1997, 141(1): 122-149.

\bibitem{stein}
E. M. Stein, G. Weiss.  Introduction to Fourier analysis on Euclidean spaces. Princeton University Press, 1971.
	
\bibitem{Spiraling-2019}
S. Terracini,  G. Verzini, A. Zilio. Spiraling Asymptotic Profiles of Competition‐Diffusion Systems. Communications on Pure and Applied Mathematics, 2019, 72(12): 2578-2620.

\bibitem{wang-zhang-2010}
 K. Wang, Z. Zhang.  Some new results in competing systems with many species. Annales de l'IHP Analyse non linéaire. 2010, 27(2): 739-761.
 
 \bibitem{W-1996}
 C. Wulff.  Theory of meandering and drifting spiral waves in reaction-diffusion systems. Doctoral thesis, FU Berlin, 1996.

\bibitem{FA-Yosida}
K. Yosida. Functional analysis. Springer Science \& Business Media, 2012.

\bibitem{ziedler1993}
E. Zeidler, P R. Wadsack.  Nonlinear functional analysis and its applications: Fixed-point theorems. Transl. by Peter R. Wadsack. Springer-Verlag, 1993.



	
\end{thebibliography}
\end{document}